\theoremstyle{plain} 
\newtheorem{thm}{Theorem}
\newtheorem{thm-int}{Theorem}
\newtheorem{lem}[thm]{Lemma}
\newtheorem{cor}[thm]{Corollary}
\newtheorem{prop}[thm]{Proposition}
\theoremstyle{definition} 
\newtheorem{defn}[thm]{Definition}	
\newtheorem{example}[thm]{Example}
\newtheorem{rmk}[thm]{Remark}
\def\real{\mathbb R}
\def\nat{\mathbb N}
\def\integ{\mathbb Z}
\def\metr{\mathbf{g}}
\def\bigo{\mathcal O}
\DeclareMathOperator\ord{ord}
\def\eps{\varepsilon}
\def\crex {\overrightarrow{\mathrm{exp}}\int}
\DeclareMathOperator\supp{supp}
\DeclareMathOperator\xLone{L^1}
\DeclareMathOperator\xCinfty{C^\infty}
\def\distr{\Delta}
\def\lcont{\xLone([0,T],\real^m)}
\def\lcontr{\xLone([0,T])}
\def\drift{f_0}
\DeclareMathOperator{\dist}{dist}
\def\dsr{d_{\mathrm{SR}}}
\def\sM{s}
\def\inter{I}
\def\epmtime{\mathcal T}
\newcommand{\adj}[2]{(\text{ad}^{#1}{#2})}
\newcommand{\td}[2]{f^{#1}_{#2}}
\newcommand{\ntd}[2]{\widehat f^{#1}_{#2}}
\newcommand{\homvf}[2]{F^{#1}_{#2}}
\def\dtd{\rho}
\def\dd{\rho^{\drift}}
\def\ddt{\rho_T^{\drift}}
\newcommand{\bsr}[2]{B_{\mathrm{SR}}(#1,#2)}
\newcommand{\btd}[2]{\mathcal R(#1,#2)}
\newcommand{\btdt}[3]{\mathcal R_{#3}(#1,#2)}
\newcommand{\bd}[2]{\mathcal R^{\drift}(#1,#2)}
\newcommand{\bdt}[2]{\mathcal R^{\drift}_T(#1,#2)}
\newcommand{\bbox}[1]{\mathrm{Box}\left(#1\right)}
\newcommand{\boxd}[1]{\Pi\left(#1\right)}
\newcommand{\vett}{\{f_1,\ldots,f_m\}}
\DeclareMathOperator{\Span}{span}
\DeclareMathOperator{\len}{length}
\def\cost{c}
\def\costd{c_{\drift}}
\newcommand{\app}{homogeneous series approximation }
\newcommand{\appsys}{series homogeneous}
\begin{document}

\title{H\"older continuity of the value function for control-affine systems}
	\thanks{This work was  supported by the European Research Council, ERC StG 2009 “GeCoMethods”, contract number 239748, and by the ANR project {\it GCM}, program ``Blanche'', project number NT09\_504490}
\author{Dario Prandi$^\dagger$$^\ddagger$}
    \address{$^\dagger$Centre National de Recherche Scientifique (CNRS), CMAP, \'Ecole Polytechnique, Route de Saclay, 91128 Palaiseau Cedex, France and Team GECO, INRIA-Centre de Recherche Saclay,}
    \email{prandi@cmap.polytechnique.fr}
    \address{$^\ddagger$SISSA, Trieste, Italy}

\maketitle

\begin{abstract}
	We prove the continuity and we give a Holder estimate for the value function associated with the $L^1$ cost of the control-affine system $\dot q = \drift(q)+\sum_{j=1}^m u_j f_j(q)$, satisfying the strong H\"ormander condition.
	This is done by proving a result in the same spirit as the Ball-Box theorem for driftless (or sub-Riemannian) systems.
	The techniques used are based on a reduction of the control-affine system to a linear but time-dependent one, for which we are able to define a generalization of the nilpotent approximation.
	Finally, we also prove the continuity of the value function associated with the $L^1$ cost of time-dependent systems of the form $\dot q = \sum_{j=1}^m u_j f_j^t(q)$. 
\end{abstract}

\noindent
\textbf{Key words:} control-affine systems, time-dependent systems, sub-Riemannian geometry, value function, Ball-Box theorem, nilpotent approximation.

\smallskip
\noindent
\textbf{2010 AMS subject classifications:} 53C17, 93C15.

\section{Introduction}

	A sub-Riemannian control system on a smooth manifold $M$ is a control system in the form
	\begin{equation}\label{int:sr}
		\dot \gamma(t) = \sum_{i=1}^m u_i(t) \,f_i(\gamma(t)),\qquad \text{a.e. }t\in[0,T],
	\end{equation}
	where $u:[0,T]\rightarrow\real^m$ is an integrable control function and $\vett$ is a family of smooth vector fields satisfying the H\"ormander condition, i.e., such that its iterated Lie brackets generate the whole tangent space at any point. 
	The length of a curve $\gamma$ solving \eqref{int:sr}, is then defined as the $\len(\gamma)=\min\int_0^T |u|\,dt$, where the minimum is taken over all the possible $u(\cdot)$ satisfying the above ODE. Due to the linearity of the system w.r.t. $u$, this length will be independent of the parametrization of $\gamma$.
	Finally, we define
	\[
		\dsr(q,q') =\inf\{ \len(\gamma)\colon\: \gamma:[0,1]\to M \text{ is a solution of } \eqref{int:sr},\,\gamma(0)=q\text{ and } \gamma(T)=q'\}.
	\]
	By the H\"ormander condition, $\dsr$ is a distance, called Carnot-Carath\'eodory distance, endowing $M$ with a natural metric space structure.
	A manifold considered together with a sub-Riemannian control system is  called a sub-Riemannian manifold.
	
	Define $\distr^1=\Span\vett$ and $\distr^{s+1}=\distr^s+[\distr^{s},\distr^{1}]$, for every $s\in\nat$. Under the hypothesis that $\vett$ is equiregular, i.e., that, for each $s\in\nat$, the dimension of $\distr^s(q)$ is independent of $q\in M$, the H\"ormander condition implies that there exists a (minimal) $r\in\nat$ such that $\distr^r(q)=T_q M$ for all $q\in M$. Such $r$ is called the \em degree of non-holonomy \em of the sub-Riemannian control system.
	
	A fundamental result in the theory of sub-Riemannian manifolds is the celebrated Ball-Box theorem (see for example \cite{Bellaiche1996}). This theorem gives a rough description of the infinitesimal shape of the sub-Riemannian balls. Namely, at any point $q$ of an equiregular sub-Riemannian manifold, the sub-Riemannian ball of small radius $\eps$ is equivalent, in privileged coordinates, to the box
	\[
		\underbrace{[-\eps,\eps]\times\ldots\times[-\eps,\eps]}_{\dim\distr^1} \times \ldots\times \underbrace{[-\eps^{s},\eps^{s}]\times\ldots\times [-\eps^{s},\eps^{s}]}_{\dim\distr^{s}-\dim\distr^{s-1}}\times \ldots \times\underbrace{[-\eps^r,\eps^r]\times \ldots\times [-\eps^r,\eps^r]}_{\dim\distr^{r}-\dim\distr^{r-1}}.
	\]
	By this we mean that, for some constant $C>0$, the sub-Riemannian ball is contained in a box of side $C\eps$, and contains a box of side $\eps/C$.
	
	This fact has a plethora of applications. 
	First, it allows to prove a H\"older regularity estimate with respect to the Euclidean distance in coordinates, namely that, for $q'$ sufficiently close to $q$, it holds
	\begin{equation}\label{int:holder}
		|q'|\lesssim \dsr(q,q') \lesssim |q'|^{\nicefrac{1}{r}}.
	\end{equation}
	Here we use ``$\lesssim$" to denote an inequality up to a multiplicative constant, independent of $q'$.
	Then, among many others, it is a fundamental step in the computation of the Hausdorff dimension of the manifold (see \cite{Mitchell1985}), and it is used to obtain asymptotic estimates on the heat kernel (see e.g., \cite{Rothschild1976,Jerison1987,Folland1974,Agrachev2009}). Moreover, it is the main tool in computing the asymptotic equivalents of the entropy and the complexity of curves (see e.g., \cite{Jean2001,Jean2003a,Romero-Melendez2004,Gauthier2005,Gauthier2006}).
	
	In this paper, we focus on a very important generalization of the control system \eqref{int:sr}, namely on control-affine systems. These systems are obtained by adding to \eqref{int:sr} an uncontrolled vector field $\drift$, called the \em drift\em,  and are in the form
	\begin{equation}\label{int:d}
		\dot \gamma(t) = f_0(\gamma(t))+ \sum_{i=1}^m u_i(t) \,f_i(\gamma(t)),\qquad \text{a.e. }t\in[0,T].
	\end{equation}
	These kind of systems appears in plenty of applications. As an example we cite, mechanical systems with controls on the acceleration (see e.g., \cite{Bullo2004}, \cite{Sigalotti2010}), where the drift is the velocity, or quantum control (see e.g., \cite{D'Alessandro2008}, \cite{Boscain2006}), where the drift is the free Hamiltonian.
	We always assume the strong H\"ormander condition, i.e., that the family $\vett$ satisfies the H\"ormander condition.
	
	The cost of a curve $\gamma$ solving \eqref{int:d} is $\costd(\gamma)=\min\int_0^T|u|\,dt$. 
	Unlike the sub-Riemannian length, due to the presence of the drift, the cost depends on the parametrization of the curve. 
	Finally, the value function, between $q,q'\in M$, of the control system at time $T>0$, is defined as
	\[
		\ddt(q,q')=\inf\{ \costd(\gamma)\colon\: \gamma:[0,T']\to M \text{ solves \eqref{int:d},  } \gamma(0)=q, \, \gamma(T')=q'\text{, and }T'\le T \}.
	\]
	Assume now that the drift is regular, in the sense that there exists $s\in\nat$ such that  $\drift(q)\in\distr^s(q)\setminus \distr^{s-1}(q)$, for any $q\in M$, where $\distr^s$ is defined through the vector fields $\vett$ as before.
	Our main result is, then, a generalization of \eqref{int:holder} to this context. 
	\begin{thm}\label{thm:dhold}
		Let $z=(z_1,\ldots,z_n)$ be a system of privileged coordinates at $q$ for $\vett$, rectifying $\drift$ as the $k$-th coordinate vector field $\partial_{z_k}$, for some $1\le k \le n$. 
		Then, for sufficiently small $T$ and $\eps$, if $\ddt(q,q')\le \eps$ then in $z$ coordinates it holds
		\[
			\dist\left(z(q'),z(e^{[0,T]\drift}q)\right) \lesssim \,\ddt (q,q')\, \lesssim  \dist\left(z(q'),z(e^{[0,T]\drift}q)\right)  ^{\nicefrac{1}{r}}.
		\]
		Here for  any $x\in\real^n$ and $A\subset\real^n$,  $\dist(x,A)=\inf_{y\in A}|x-y|$ denotes the Euclidean distance between them, $e^{t\drift}$ denotes the flow of $\drift$, and $r$ is the degree of non-holonomy of the sub-Riemannian control system defined by $\vett$.
	\end{thm}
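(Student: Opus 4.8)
The plan is to remove the drift by conjugating with its flow, reducing \eqref{int:d} to a driftless but time-dependent control system, then to establish a Ball-Box type description of the reachable sets of that system, and finally to read off the two-sided estimate.

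\emph{Step 1 (removing the drift).} Given a trajectory $\gamma:[0,T']\to M$ of \eqref{int:d} associated with a control $u$, with $T'\le T$, I would set $\tilde\gamma(t)=e^{-t\drift}(\gamma(t))$. Since $\drift$ is invariant under its own flow, the drift term cancels in the computation of $\dot{\tilde\gamma}$, leaving $\dot{\tilde\gamma}(t)=\sum_{j=1}^m u_j(t)\,\td{t}{j}(\tilde\gamma(t))$ with $\td{t}{j}=(e^{-t\drift})_*f_j$, the same control $u$ (hence the same $L^1$ cost), $\tilde\gamma(0)=q$ and $\tilde\gamma(T')=e^{-T'\drift}q'$; conversely every trajectory of the time-dependent system lifts back. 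Writing $\nu_{T'}(q,q'')$ for the minimal $L^1$ cost of steering $\dot q=\sum_j u_j\td{t}{j}(q)$ from $q$ to $q''$ in time $T'$, this yields
\[
  \ddt(q,q')=\inf_{0\le T'\le T}\nu_{T'}\!\left(q,\,e^{-T'\drift}q'\right).
\]
In the chosen coordinates $\drift=\partial_{z_k}$, so $e^{t\drift}$ is the translation $z\mapsto z+t\,e_k$ and $z\bigl(e^{[0,T]\drift}q\bigr)$ is the segment $z(q)+[0,T]e_k$; the theorem thus amounts to comparing $\inf_{T'\le T}\nu_{T'}(q,\cdot)$ with the Euclidean distance to that segment.

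\emph{Step 2 (Ball-Box for the time-dependent system).} The core of the argument — and the step I expect to be the main obstacle — is to show that for $T$ and $\eps$ small there is $C>1$ such that, uniformly in $T'\in[0,T]$, the sublevel set satisfies in privileged coordinates $z$ for $\vett$ at $q$
\[
  \bbox{\eps/C}\ \subseteq\ z\bigl(\{\nu_{T'}(q,\cdot)\le\eps\}\bigr)\ \subseteq\ \bbox{C\eps},
\]
where $\bbox{\eta}$ is the weighted box of sides $\eta^{w_1},\dots,\eta^{w_n}$ centered at $z(q)$ and $w_1\le\dots\le w_n$ are the weights of the coordinates. The difficulty is that, although $e^{-t\drift}\to\mathrm{id}$ in $\xCinfty$ as $t\to0$, so that $\td{t}{j}=f_j+t\,[\drift,f_j]+\dots$ is close to $f_j$, the higher-order terms are built from iterated brackets $(\mathrm{ad}\,\drift)^\ell f_j$ whose weighted degree is strictly smaller than that of $f_j$ (since $\drift\in\distr^{\sM}\setminus\distr^{\sM-1}$), so the classical Ball-Box theorem cannot be invoked by mere perturbation stability. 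I would get around this by assigning the time variable $t$ the weight $\sM$ of the drift: with this convention the family $\td{t}{j}$ acquires a homogeneous approximation $\ntd{t}{j}$, homogeneous of degree $-1$ in $(z,t)$, and the corresponding nilpotent time-dependent system is invariant under the anisotropic dilations acting by $\lambda^{\sM}$ on $t$. By this self-similarity its cost-$\eps$, time-$T'$ reachable set is exactly the $\eps$-box above, and a quantitative control of the error $\td{t}{j}-\ntd{t}{j}$ — the time-dependent analogue of the estimates underlying the classical nilpotent-approximation proof of the Ball-Box theorem — transfers both inclusions, with a uniform constant, to the genuine system. Keeping $T$ small is what confines all trajectories to a single privileged chart and keeps these errors negligible; note also that the zero-control trajectory stays at $z(q)$, which is why the box is centered there.

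\emph{Step 3 (translating back).} By Step 1, $\{\ddt(q,\cdot)\le\eps\}=\bigcup_{0\le T'\le T}e^{T'\drift}\{\nu_{T'}(q,\cdot)\le\eps\}$, which in $z$-coordinates is therefore squeezed, up to the factor $C$, between the $\bbox{\eps/C}$- and $\bbox{C\eps}$-neighborhoods of the segment $z\bigl(e^{[0,T]\drift}q\bigr)$. For $\eps<1$ the box $\bbox{C\eps}$ lies in a Euclidean ball of radius $\sim C\eps$ (its largest side), so $\ddt(q,q')\le\eps$ forces $\dist\bigl(z(q'),z(e^{[0,T]\drift}q)\bigr)\lesssim\eps$; since $q'\in\{\ddt(q,\cdot)\le\ddt(q,q')\}$, applying this at level $\ddt(q,q')$ gives the left-hand inequality. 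Conversely $\bbox{\eps/C}$ contains a Euclidean ball of radius $\sim(\eps/C)^r$ (its smallest side, $r=w_n$ being the degree of non-holonomy), so taking $\eps$ comparable to $\dist\bigl(z(q'),z(e^{[0,T]\drift}q)\bigr)^{\nicefrac{1}{r}}$ places $z(q')$ inside the $\bbox{\eps/C}$-neighborhood of the segment, whence $\ddt(q,q')\le\eps\lesssim\dist\bigl(z(q'),z(e^{[0,T]\drift}q)\bigr)^{\nicefrac{1}{r}}$, the right-hand inequality. This is precisely the mechanism by which the classical Ball-Box theorem produces \eqref{int:holder}; the exponent is again $\nicefrac{1}{r}$, with no dependence on $\sM$, because the drift has been entirely absorbed into its own flow.
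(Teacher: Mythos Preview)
Your Step~1 reduction and the Step~3 readout are correct; the gap is in Step~2. The claim that the cost-$\eps$, time-$T'$ reachable set of the time-dependent system $\dot q=\sum_j u_j\,\td{t}{j}(q)$ is contained in $\bbox{C\eps}$ \emph{uniformly in $T'\in[0,T]$} is false, and the self-similarity argument does not yield it. Homogeneity under $(z,t)\mapsto(\delta_\lambda z,\lambda^{\sM}t)$ only tells you that the cost-$\eps$, time-$T'$ reachable set of the homogeneous approximation is the $\delta_\eps$-image of the cost-$1$, time-$T'/\eps^{\sM}$ reachable set; this still depends on the ratio $T'/\eps^{\sM}$, and as that ratio grows the set genuinely spreads. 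Concretely, the control $u_i\equiv\eta$ on $[0,T']$ applied to $\td{t}{i}=f_i+t\,[\drift,f_i]+\cdots$ produces (Proposition~\ref{prop:inner}) a displacement of order $\eps\,(T')^{\ell}$ in the direction of $\adj{\ell}{\drift}f_i$, which has weighted degree $-\ell\sM-1$; when $T'\gg\eps^{\sM}$ this is far outside the side $\eps^{\ell\sM+1}$ of $\bbox{C\eps}$. So the upper inclusion you need in Step~2 simply does not hold.

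What the paper does instead is to accept a $T$-dependent outer estimate for the time-dependent reachable set (Theorem~\ref{thm:ballbox}: roughly $|z_i|\lesssim\eps(\eps+T^{1/\sM})^{w_i-1}$ for $w_i>\sM$, and $|z_i|\lesssim\eps^{w_i}+\eps T^{w_i/\sM}$ for $w_i\le\sM$), and then to close the argument by bounding the \emph{time} $T'$ actually used by a trajectory in terms of its cost and its endpoint. This is Lemma~\ref{lem:time}: since $z_*\drift=\partial_{z_k}$, the $k$-th coordinate of the endpoint satisfies $z_k(q')=T'+O(\eps^{\sM})$, whence $T'\lesssim\eps^{\sM}+\max\{z_k(q'),0\}$. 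Feeding this back into the $T$-dependent bound gives the outer set $\Pi_T(C\eps)$ of Theorem~\ref{thm:dballbox}, which is \emph{not} a tube of fixed boxes around the drift segment but a set whose cross-section at height $z_k=\xi>0$ widens with $\xi$. The H\"older inequality still follows, because each such cross-section has Euclidean diameter $\lesssim\eps(\eps+T^{1/\sM})^{r-1}\lesssim\eps$ for $T$ small. Your outline is missing precisely this time-recovery step; without it the outer estimate in Step~3 has no foundation.
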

	In this result, instead of the Euclidean distance from the origin that appeared in \eqref{int:holder}, we have the distance from the integral curve of the drift. This is due to the fact that moving in this direction has null cost. 
	As in the sub-Riemannian case, Theorem~\ref{thm:dhold} is a consequence of an estimate on the shape of the reachable sets, contained in Theorem~\ref{thm:dballbox}. 
	Moreover, although Theorem~\ref{thm:dhold} seems a natural generalization of \eqref{int:holder}, the shape of the reachable sets described in Theorem~\ref{thm:dballbox} is much more complicated than the boxes of the sub-Riemannian case, yielding a more difficult proof. 
	Theorem~\ref{thm:dhold} and \ref{thm:dballbox} represent the key step for generalizing the estimates on the complexity of curves from sub-Riemannian control systems to control-affine systems.
	
	It is worth to mention that these results regarding control-affine systems are obtained by reducing them, as in \cite{Agrachev2010a}, to time-dependent control systems in the form
	\begin{equation}\label{int:td}
		\dot \gamma(t) = \sum_{i=1}^m u_i(t) \,f_i^t(\gamma(t)),\qquad \text{a.e. }t\in[0,T],
	\end{equation}
	where $f^t_i=(e^{-t\drift})_* f_i\,$ is the pull-back of $f_i$ through the flow of the drift. 
	On these systems, that are linear in the control, we are able to define a good notion of approximation of the control vector fields. Namely, in Section~\ref{subsec:etd} we will define a generalization of the nilpotent approximation, used in the sub-Riemannian context, taking into account the fact that in the system \eqref{int:td}, exploiting the time, we can generate the direction of the brackets between $\drift$ and the $f_j$s. 
	This approximation and an iterated integral method yield the correct estimates on the reachable set, contained in Theorem~\ref{thm:ballbox}.
	
	\vspace{.5cm}
	The paper is divided in three sections. In Section~\ref{sec:sr} we recall some generalities and definitions regarding sub-Riemannian control systems, used in the following sections. 
	In Section~\ref{sec:td} we consider control systems in the form \eqref{int:td}, and we prove the continuity of the value function  for general time-dependent vector fields. Then, in Theorem~\ref{thm:ballbox}, restricting then to the case where the time dependency is explicitly given as $f_i^t=(e^{-t\drift})_* f_i$, we establish some estimates on the reachable sets, in the same spirit as the Ball-Box theorem. 
	Finally, in Section~\ref{sec:drift} we consider control-affine systems. After proving the relation between control-affine systems and time-dependent systems, we prove the continuity of the value function. Then, in Lemma~\ref{lem:time}, exploiting the affine nature of the control system, we give an upper bound on the time needed to join two points $q$ and $q'$ as a function of $\ddt(q,q')$. From this fact and the estimates of Section~\ref{sec:td}, Theorems~\ref{thm:holder}~and~\ref{thm:dballbox} follow. Theorem~\ref{thm:dhold} is then a particular case of Theorem~\ref{thm:holder}, that holds under slightly milder assumptions on $\drift$ and $\vett$.
	
\section{Sub-Riemannian Geometry}\label{sec:sr}

Throughout this paper, $M$ is an $n$-dimensional connected smooth manifold. 
In this section we recall some classical notions and results of sub-Riemannian geometry.

\subsection{Sub-Riemannian control systems}
A sub-Riemannian (or non-holonomic) control system on $M$ is a control system in the form
	\begin{equation}\label{cs:sr}\tag{SR}
		\dot q = \sum_{i=1}^m u_i \,f_i(q),\qquad q\in M,\quad u=(u_1,\ldots,u_m)\in\real^m,
	\end{equation}
	where $\vett$ is a family of smooth vector fields on $M$. We let $f_u=\sum_{i=1}^mu_i\,f_i$.

	An absolutely continuous curve $\gamma:[0,T]\to M$ is \em \eqref{cs:sr}-admissible \em if there exists a control $u\in \lcont$ such that $\dot\gamma(t)=f_{u(t)}(\gamma(t))$, for a.e. $t\in [0,T]$. 
	The curve is said to be \em associated \em to any such control.
	The length of $\gamma$ is defined as 
	\begin{equation}\label{def:length}
		\len(\gamma)= 
		\min  \|u\|_{\lcont},
	\end{equation}
	where the minimum is taken over all controls $u$ such that  $\gamma$ is associated with $u$.
	It is attained, due to convexity reasons.  
	Notice that, by definition, $\len(\gamma)$ is invariant under time reparametrization of the curve. 
	The \em distance \em induced by the sub-Riemannian system on $M$ is then defined as
	\[
	\begin{split}
		\dsr(q,q') &=\inf\{ \len(\gamma)\colon\: \gamma \text{ \eqref{cs:sr}-admissible and } \gamma:q\rightsquigarrow q'\},
	\end{split}
	\]
	where $\gamma:q\rightsquigarrow q'$ stands for $\gamma:[0,T]\to M$, for some $T>0$,  $\gamma(0)=q$ and $\gamma(T)=q'$. 

	Let $\distr$ be the $\xCinfty$-module generated by the vector fields $\vett$ (in particular, it is closed under multiplication by $\xCinfty(M)$ functions and summation). Let $\distr^1=\distr$, and define recursively $\distr^{s+1}= \distr^s+[\distr^s,\distr]$, for every $s\in\nat$. Due to the Jacobi identity $\distr^s$ is the $\xCinfty$-module of linear combinations of all commutators of $f_1,\ldots,f_m$ with length $\le s$. 
	For $q\in M$, let $\distr^s(q)=\{ f(q)\colon\: f\in\distr^s\}\subset T_q M$. We say that $\{f_1,\ldots,f_m\}$ satisfies the \em H\"ormander condition \em (or that it is a bracket-generating family of vector fields) if $\bigcup_{s\ge1} \distr^s(q)=T_q M$ for any $q\in M$. 
In the following we will always assume this condition to be satisfied. 

By the Chow--Rashevsky theorem (see for instance \cite{Agrachev2010}), the hypothesis of connectedness of $M$ and the H\"ormander condition guarantee the finiteness and  continuity of $\dsr$ with respect to the topology of $M$. 
Hence, the function $\dsr$, called \em sub-Riemannian \em or \em Carnot-Carath\'eodory distance\em, induces on $M$ a metric space structure. The open balls of radius $\eps>0$ and centered at $q\in M$, with respect to $\dsr$, are denoted by $\bsr q \eps$. 

	We say that a \eqref{cs:sr}-admissible curve $\gamma$ is a minimizer of the sub-Riemannian distance between $q,q'\in M$ if $\gamma:q\rightsquigarrow q'$ and $\len(\gamma)=\dsr(q,q')$. Equivalently, $\gamma$ is a minimizer between $q,q'\in M$ if it is a solution of the free-time optimal control problem, associated with \eqref{cs:sr},
	\begin{equation}\label{eq:l1}
		\|u\|_{\xLone(0,T)}=\int_0^T \sqrt{\sum_{j=1}^m u_j^2(t)}\,dt\rightarrow \min,\qquad \gamma(0)=q,\quad \gamma(T)=q',\quad T>0.
	\end{equation}
	Indeed, the sub-Riemannian distance is the value function associated with this problem.
	It is a classical result that, for any couple of points $q,q'\in M$ sufficiently close, there exists at least one minimizer. 
	
\begin{rmk}\label{rmk:contr}
	The optimal control problem \eqref{eq:l1} is equivalent to the following, with $p\ge1$ and $T>0$ fixed,
	\begin{equation}\label{eq:lp}
		\|u\|_{L^p(0,T)}=\left( \int_0^T |u|^p\,dt \right)^{\nicefrac{1}{p}} \rightarrow \min,\qquad \gamma(0)=q,\quad \gamma(T)=q',
	\end{equation}
	In fact, due to the invariance under time reparametrization of system \eqref{cs:sr}, in \eqref{eq:l1} we can fix either $T>0$ or the Euclidean norm of $u$.
	Moreover, by the H\"older inequality, for any $p> 1$, letting $p'$ be the conjugated exponent  to $p$ (i.e., $1/p+1/p'=1$), we get
	$
		\|u\|_{\xLone(0,T)}\le T^{\nicefrac{1}{p'}} \|u\|_{L^p(0,T)},
	$
	with the equality holding if and only if $|u|$ is constant. 
	From these two facts, it is easy to check that minimizers of the optimal control problem \eqref{eq:lp} coincide with the minimizers of \eqref{eq:l1} with constant norm. Thus the two optimal control problems are equivalent. 
	\end{rmk}
	
	\begin{rmk}
	This control theoretical setting can be stated in purely geometric terms. Indeed, it is equivalent to a \em generalized sub-Riemannian \em structure. Such a structure is defined by a rank-varying smooth distribution and a Riemannian metric on it (see \cite{Agrachev2010} for a precise definition). 	
	In a sub-Riemannian control system, in fact, the map $q\mapsto \Span\{f_1(q),\ldots,f_m(q)\}\subset T_qM$ defines a rank-varying smooth distribution, which is naturally endowed with the Riemannian norm defined, for $v\in \distr(q)$, by
	\[
		\metr(q,v)=\inf \left\{ |u|=\sqrt{ u_1^2+\cdots+u_m^2} \colon\: f_u (q)= v \right\}.
	\] 
	The pair $(\distr,\metr)$ is thus a generalized sub-Riemannian structure on $M$. Conversely, every rank-varying distribution is finitely generated, see \cite{Agrachev2008,Agrachev2010,Agrachev2010b,Drager2011}, and thus a sub-Riemannian distance can be written, globally, as the value function of a control system of the type \eqref{cs:sr}.
	\end{rmk}
	
	\subsection{Privileged coordinates and nilpotent approximation}
	
	We now introduce the equivalent, in the sub-Riemannian context, of the linearization of a vector field. This classical procedure, called \em nilpotent approximation\em, is possible only in a carefully chosen set of coordinates, called \em privileged coordinates\em.
			
	Since $\{f_1,\ldots,f_m\}$ is bracket-generating, the values of the sets $\distr^s$ at $q$ form a flag of subspaces of $T_q M$,
	\[
		\distr^1(q)\subset \distr^2(q) \subset\ldots \subset \distr^{r}(q)=T_qM.
	\]
	The integer $r=r(q)$, which is the minimum number of brackets required to recover the whole $T_qM$, is called \em degree of non-holonomy \em (or \em step\em) of the family $\vett$ at $q$. 
	Set $n_s(q)=\dim \distr^s(q)$. The integer list $(n_1(q),\ldots,n_{r}(q))$ is called the \em growth vector \em at $q$. 
	From now on we fix $q\in M$, and denote by $r$ and $(n_1,\ldots,n_r)$ its degree of non-holonomy and its growth vector, respectively.
	Finally, let $w_1\le\ldots\le w_n$ be the \em weights \em associated with the flag, defined by $w_i=s$ if $n_{s-1}<i\le n_s$, setting $n_0=0$. 
	 
	For any smooth vector field $f$, we denote its action, as a derivation on smooth functions, by $f: a\in \xCinfty(M)\mapsto fa\in \xCinfty(M)$. For any smooth function $a$ and every vector field $f$ with $f\not\equiv 0$ near $q$, their \em (non-holonomic) order \em at $q$ is
	\[
	\begin{split}
		\ord_{q}(a)&=\min\{ s\in\nat\colon\: \exists i_1,\ldots,i_s\in\{1,\ldots,m\} \text{ s.t. } (f_{i_1}\ldots f_{i_s}\,a)(q)\neq 0\},\\	
		\ord_{q}(f)&=\max\{ \sigma\in\integ \colon\: \ord_{q}(fa) \ge \sigma+\ord_{q}(a) \text{ for any } a\in \xCinfty(M)\}.
	\end{split}
	\]
	In particular it can be proved that $\ord_q(a)\ge s$ if and only if $a(q')=\bigo(\dsr(q',q))^s$.	 
	 	 
	\begin{defn}
		A \em system of privileged coordinates \em at $q$ for $\vett$ is a system of local coordinates $z=(z_1,\ldots,z_n)$ centered at $q$ and such that $\ord_q(z_i)=w_i$, $1\le i\le n$.
	\end{defn}

	For any point $q\in M$ there always exists a system of privileged coordinates around $q$.  
	Consider such a system $z=(z_1,\ldots,z_n)$. 
	We now show that this allows to compute the order of functions or vector fields in a purely algebraic way. Given a multiindex $\alpha=(\alpha_1,\ldots,\alpha_n)$ we define the weighted degree of the monomial $z^\alpha=z_1^{\alpha_1}\cdots z_n^{\alpha_n}$ as $w(\alpha)=w_1\alpha_1+\cdots+w_n\alpha_n$ and the weighted degree of the monomial vector field $z^\alpha\partial_{z_j}$ as $w(\alpha)-w_j$. Then one can prove that, given $a\in \xCinfty(M)$ and a smooth vector field $f$, with Taylor expansion
	\[
		a(z)\sim \sum_{\alpha} a_\alpha z^\alpha \quad \text{ and }\quad f(z)\sim \sum_{\alpha,j} f_{\alpha,j} z^\alpha\partial_{z_j},
	\]
	their orders at $q$ can be computed as
	\[
		\ord_q(a)=\min\{ w(\alpha)\colon\: a_\alpha\neq 0\}\quad \text{ and }\quad
		\ord_q(f)=\min\{ w(\alpha)-w_j\colon\: f_{\alpha,j} \neq 0\}.
	\]	
	A function or a vector field are said to be \em homogeneous \em if all the nonzero terms of the expansion have the same weighted degree.
	
	 We recall that, for any $a,b\in \xCinfty(M)$ and any smooth vector fields $f,g$, the order satisfies the following properties
	 \begin{equation}\label{eq:ord}
	 \begin{split}
		\ord_q(a+b)  =\min\{\ord_q(a),\ord_q(b)\},\qquad &\ord_q(ab) = \ord_q(a)+\ord_q(b),\\
		\ord_q(f+ g) =\min\{\ord_q(f),\ord_q(g)\}, \qquad &\ord_q([f,g]) \ge \ord_q(f)+\ord_q(g).  \\
	\end{split}
	 \end{equation}
	
	Consider $f_i$, $1\le i\le m$. 
	By the definition of order, it follows that $\ord_q(f_i) \ge -1$. Then we can express $f_i$ in coordinates as
	\[
		z_* f_i=\sum_{j=1}^n\big( h_{ij}+r_{ij} \big)\partial_{z_j},
	\]
	where $z_*$ is the push-forward operator on vector fields associated with the coordinates, defined as $z_*f=dz\circ f\circ z^{-1}$, $h_{ij}$ are homogeneous polynomials of weighted degree $w_j-1$, and $r_{ij}$ are  functions of order larger than or equal to $w_j$. 
	
	\begin{defn}\label{def:nil}
		The nilpotent approximation at $q$ of $f_i$, $1\le i\le m$, associated with the privileged coordinates $z$ is the vector field with coordinate representation
		\[
			z_*\widehat f_i=\sum_{j=1}^n h_{ij} \,\partial_{z_j}.
		\]
	\end{defn}
	The nilpotentized sub-Riemannian control system is then defined as
	\begin{equation}\label{cs:nsr}\tag{NSR}
		\dot q = \sum_{j=1}^m u_j(t) \widehat f_j(q).\\
	\end{equation}
	The family of vector fields $\{\widehat f_1,\ldots,\widehat f_m\}$ is bracket-generating and nilpotent of step $r$ (i.e., every iterated bracket $[f_{i_1},[\ldots,[f_{i_{k-1}},f_{i_k}]]]$ of length larger than $r$ is zero). 
	
	The main consequence of the nilpotent approximation is the following (see for example \cite[Proposition 7.29]{Bellaiche1996}).
	\begin{prop}\label{prop:nilsr}
		Let $z=(z_1,\ldots,z_n)$ be a system of privileged coordinates at $q\in M$ for $\vett$. 
		For  $T>0$ and $u\in \xLone([0,T];\real^m)$, with $|u|\equiv 1$, let $\gamma(\cdot)$ and $\hat \gamma(\cdot)$ be the trajectories associated with $u$ in \eqref{cs:sr} and \eqref{cs:nsr}, respectively, and such that $\gamma(0)=\hat\gamma(0)=q$. 
	Then, there exist $C,T_0>0$, independent of $u$, such that, for any $t<T_0$, it holds 		
	\begin{equation}
		|z_i(\gamma(t))-z_i(\hat \gamma (t))|\le C t^{w_i+1},\qquad i=1,\ldots,n.
	\end{equation}
	\end{prop}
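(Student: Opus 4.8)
The plan is to work throughout in the fixed privileged coordinates $z$ and to compare the two trajectories after an anisotropic rescaling; this is the classical argument (cf.\ \cite[Proposition~7.29]{Bellaiche1996}). Set $x(t)=z(\gamma(t))$ and $\hat x(t)=z(\hat\gamma(t))$. Using the decomposition $z_*f_i=\sum_j(h_{ij}+r_{ij})\partial_{z_j}$, with $h_{ij}$ homogeneous of weighted degree $w_j-1$ and $\ord_q(r_{ij})\ge w_j$, these curves solve, for a.e.\ $t$,
\[
\dot x_j=\sum_{i=1}^m u_i\bigl(h_{ij}(x)+r_{ij}(x)\bigr),\qquad
\dot{\hat x}_j=\sum_{i=1}^m u_i\,h_{ij}(\hat x),\qquad x(0)=\hat x(0)=0 .
\]
The relevant tools are the anisotropic dilations $\delta_\lambda(z)=(\lambda^{w_1}z_1,\dots,\lambda^{w_n}z_n)$ and the $1$-homogeneous pseudonorm $\|z\|=\max_j|z_j|^{1/w_j}$: since $h_{ij}$ has weighted degree $w_j-1$ one has $h_{ij}(\delta_\lambda z)=\lambda^{w_j-1}h_{ij}(z)$ and $|h_{ij}(z)|\le C\|z\|^{w_j-1}$ for $\|z\|\le1$, while $\ord_q(r_{ij})\ge w_j$ gives, via a Taylor expansion at $q$, the pointwise bound $|r_{ij}(z)|\le C\|z\|^{w_j}$ for $\|z\|\le1$.

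First I would establish the a priori bounds $\|x(t)\|\le Ct$ and $\|\hat x(t)\|\le Ct$ for $t<T_0$, with $C$ and $T_0$ independent of $u$. Differentiating $t\mapsto\|x(t)\|$ (in the sense of upper Dini derivatives, or after replacing $\|\cdot\|$ by a smooth $1$-homogeneous gauge) and using $|u|\equiv1$, hence $\sum_i|u_i|\le\sqrt m$, together with the bounds above, one gets $\tfrac{d}{dt}\|x(t)\|\le C(1+\|x(t)\|)$ as long as $\|x(t)\|\le1$; a continuity/bootstrap argument then yields $\|x(t)\|\le Ct$ for $t<T_0$, and in particular $|x_j(t)|\le(Ct)^{w_j}$. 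The same computation, with the remainder absent, gives $\|\hat x(t)\|\le Ct$.

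Next comes the rescaling. Fix $t<T_0$, put $\eps=t$, and define on $[0,1]$ the rescaled trajectories $y(s)=\delta_{1/\eps}\bigl(x(\eps s)\bigr)$ and $\hat y(s)=\delta_{1/\eps}\bigl(\hat x(\eps s)\bigr)$, and the rescaled control $u^\eps(s)=u(\eps s)$, which still satisfies $|u^\eps|\equiv1$. Since $x(\eps s)=\delta_\eps y(s)$, the homogeneity of $h_{ij}$ and the bound on $r_{ij}$ turn the equations above into
\[
\dot y_j=\sum_{i=1}^m u^\eps_i\,h_{ij}(y)+\eps\,\beta_j(s),\qquad
\dot{\hat y}_j=\sum_{i=1}^m u^\eps_i\,h_{ij}(\hat y),\qquad y(0)=\hat y(0)=0
\]
(indeed $\dot y_j(s)=\eps^{1-w_j}\dot x_j(\eps s)$, the term $h_{ij}(\delta_\eps y)=\eps^{w_j-1}h_{ij}(y)$ keeps coefficient $1$, and $|\eps^{1-w_j}r_{ij}(\delta_\eps y)|\le C\eps\|y\|^{w_j}$). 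By the a priori bounds, $y$ and $\hat y$ stay in the fixed compact set $\{\|z\|\le C\}$ for $s\in[0,1]$, uniformly in $\eps$ and $u$, so $|\beta_j(s)|\le C$ with $C$ independent of $\eps$ and $u$; on that compact set the polynomial vector fields $h_{ij}$ are Lipschitz with a uniform constant. Subtracting the two systems, using $|u^\eps|\equiv1$ and Gronwall's lemma on $[0,1]$, we obtain $|y(s)-\hat y(s)|\le C\eps$ for all $s\in[0,1]$, with $C$ independent of $\eps$ and $u$. Evaluating at $s=1$ and undoing the dilation gives $|x_j(t)-\hat x_j(t)|=\eps^{w_j}|y_j(1)-\hat y_j(1)|\le C\eps^{w_j+1}=Ct^{w_j+1}$, which is the assertion.

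The main obstacle is keeping every constant independent of the control: this is exactly where $|u|\equiv1$ enters, both to control $\sum_i|u_i|$ and $\|u^\eps\|_{\xLone(0,1)}$ and to make the a priori estimate $\|x(t)\|\le Ct$ uniform, so that the rescaled trajectories live in a fixed compact set and the Gronwall constant does not depend on $u$. The remaining technical points are routine: the non-smoothness of $\|\cdot\|$ (bypassed as indicated above) and the fact that the trajectories are defined on the relevant small time interval, which is immediate for $\gamma$ by local existence and for $\hat\gamma$ because the $\hat f_j$, being nilpotent of finite step, are complete.
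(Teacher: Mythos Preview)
Your proof is correct. The paper does not prove this proposition directly but refers to Bella\"{\i}che; the argument it does give for the generalization in Proposition~\ref{prop:ngenappr} follows Bella\"{\i}che's scheme, which differs from yours in the second step. After the same a priori bound $\|x(t)\|\le C\eps$ (obtained, as you suggest, via a smooth $1$-homogeneous gauge), Bella\"{\i}che and the paper expand
\[
h_{ji}(x)-h_{ji}(\hat x)=\sum_{w_k<w_i}(x_k-\hat x_k)\,Q_{jik}(x,\hat x),
\]
with $Q_{jik}$ homogeneous of weighted degree $w_i-w_k-1$, and then integrate the resulting differential inequalities \emph{by induction on the weight} $w_i$, exploiting the triangular structure of the system. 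Your route instead compresses all the weights in one stroke via the dilation $\delta_{1/\eps}$: the rescaled trajectories solve the nilpotent system perturbed by a uniformly $O(\eps)$ term on a fixed compact set, and a single application of Gronwall closes the argument. This is slicker in the autonomous case. The inductive scheme, however, is what the paper actually needs later: in the time-dependent setting of Proposition~\ref{prop:ngenappr} there are two independent scales, the cost $\eps$ and the physical time $t$ (which carries weight $s=-\ord_q(\drift)$), and no single anisotropic dilation reduces the remainder to a bounded perturbation; the weight-by-weight induction then becomes essential.
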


	We recall, finally, the celebrated Ball-Box Theorem, that gives a rough description of the shape of small sub-Riemannian balls.
		
	\begin{thm}[Ball-Box Theorem]\label{thm:srballbox} Let $z=(z_1,\ldots,z_n)$ be a system of privileged coordinates at $q\in M$ for $\vett$. Then there exist $C,\eps_0>0$ such that for any $\eps<\eps_0$, it holds
	\[
		\bbox {\frac 1 C \eps} \subset \bsr {q} \eps \subset \bbox {C \eps},
	\] 
	where, $\bsr q \eps$ is identified with its coordinate representation $z(\bsr q \eps)$ and,        for any $\eta>0$, we let
	\begin{equation}\label{def:box}
		\bbox \eta = \{z\in\real^n\colon\: |z_i|\le \eta^{w_i} \},
	\end{equation}

	\end{thm}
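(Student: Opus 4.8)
\emph{Proof proposal.} The plan is to deduce both inclusions from the nilpotent approximation \eqref{cs:nsr}, with Proposition~\ref{prop:nilsr} serving as the bridge between \eqref{cs:nsr} and \eqref{cs:sr}. The upper inclusion $\bsr{q}{\eps}\subset\bbox{C\eps}$ does not even need the nilpotent approximation: since $z$ are privileged, $\ord_q(z_i)=w_i$, and we recalled that $\ord_q(a)\ge s$ implies $a(q')=\bigo(\dsr(q',q))^s$ as $q'\to q$; hence there are a neighbourhood $U$ of $q$ and a constant $C$ with $|z_i(q')|\le C\,\dsr(q,q')^{w_i}$ for every $q'\in U$ and every $i$. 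If $\eps_0$ is small enough that $\bsr{q}{\eps_0}\subset U$, then any $q'\in\bsr{q}{\eps}$ with $\eps<\eps_0$ satisfies $|z_i(q')|\le C\eps^{w_i}$, that is $q'\in\bbox{C\eps}$.

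For the lower inclusion I would first treat the nilpotent system, where everything is exact. Let $\widehat d$ be the distance induced by \eqref{cs:nsr} and $\widehat B(q,\eps)$ its balls. Since $\{\widehat f_1,\dots,\widehat f_m\}$ is bracket-generating, Chow--Rashevsky gives that $\widehat B(q,1)$, read in the coordinates $z$, is an open neighbourhood of the origin, so $\bbox{1/C_0}\subset \widehat B(q,1)$ for some $C_0>0$. The vector fields $\widehat f_i=\sum_j h_{ij}\partial_{z_j}$ are homogeneous of weighted degree $-1$ with respect to the dilations $\delta_\lambda\colon z\mapsto(\lambda^{w_1}z_1,\dots,\lambda^{w_n}z_n)$, which makes the trajectories of \eqref{cs:nsr} scale: $u(\cdot)$ steers $q$ to $p$ if and only if $u(\lambda^{-1}\,\cdot)$ steers $q$ to $\delta_\lambda(p)$, with $L^1$-norm multiplied by $\lambda$. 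Hence $\widehat d(q,\delta_\lambda p)=\lambda\,\widehat d(q,p)$, so $\widehat B(q,\eps)=\delta_\eps(\widehat B(q,1))$, and since $\delta_\eps(\bbox{\eta})=\bbox{\eps\eta}$ we obtain $\bbox{\eps/C_0}\subset\widehat B(q,\eps)$ for all $\eps>0$.

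It remains to transfer this to \eqref{cs:sr}, and this is the delicate point. Fix a small $\eps>0$ and $p\in\bbox{\eps/(2C_0)}\subset\widehat B(q,\eps/2)$; then there is a control with $|u|\equiv 1$ on some $[0,t]$ with $t<\eps/2$ steering $q$ to $p$ in \eqref{cs:nsr}, and by Proposition~\ref{prop:nilsr} the trajectory $\gamma$ of \eqref{cs:sr} driven by the same control satisfies $|z_i(\gamma(t))-z_i(p)|\le C\,t^{w_i+1}=o(\eps^{w_i})$. Thus, after the change of variables $y=\delta_\eps^{-1}(p)$, the correspondence $p\mapsto z(\gamma(t))$ becomes a map of $\bbox{1/(2C_0)}$ into $\real^n$ that is uniformly $O(\eps)$-close to the identity; a Brouwer-degree argument then gives that its image covers $\bbox{1/(4C_0)}$ once $\eps$ is small, whence $\bbox{\eps/(4C_0)}\subset\bsr{q}{\eps}$, which is the claimed inclusion (with $C$ enlarged, if needed, to also serve in the upper bound). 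The obstacle hidden here is that a length-minimizing control for \eqref{cs:nsr} need not depend continuously on its endpoint $p$, so to legitimately run the degree argument one must produce a continuous version of the correspondence. The standard device is to fix an adapted frame of iterated brackets $X_1,\dots,X_n$ with $\ord_q(X_i)=-w_i$ and to compare the second-type endpoint maps $s\mapsto e^{s_nX_n}\circ\cdots\circ e^{s_1X_1}(q)$ for \eqref{cs:sr} and for \eqref{cs:nsr}: the nilpotent one is polynomial and dilation-equivariant, each flow $e^{s_iX_i}$ is reproduced in sub-Riemannian length by a concatenation of $f_j$-flows of length $\lesssim|s_i|^{1/w_i}$ via the commutator (Campbell--Hausdorff) estimates, and the two maps are weighted perturbations of one another with invertible differential at $0$. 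Establishing these facts is the technical core of the argument, and is where we appeal to \cite{Bellaiche1996}.
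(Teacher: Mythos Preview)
The paper does not actually prove Theorem~\ref{thm:srballbox}: it is stated as a classical result (with a reference to \cite{Bellaiche1996}), and the only remark offered is the sentence ``Observe that the first inclusion follows directly from the definition of privileged coordinates.'' Your proposal is therefore not being compared to a proof in the paper, but to that one-line observation together with an implicit appeal to Bella\"iche.

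On the easy direction you and the paper agree in substance: the inclusion $\bsr{q}{\eps}\subset\bbox{C\eps}$ is exactly the statement that $\ord_q(z_i)=w_i$, i.e., $|z_i(q')|=\bigo(\dsr(q,q')^{w_i})$. (Note that the paper's phrase ``first inclusion'' literally points to the \emph{other} containment $\bbox{\frac{1}{C}\eps}\subset\bsr{q}{\eps}$; your identification of the easy direction is the standard one and matches the argument the paper's sentence is presumably intended to convey.)

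For the hard inclusion your sketch is the standard route---homogeneity of the nilpotent system under the weighted dilations, a Chow/open-mapping step for $\widehat B(q,1)$, then a perturbation/degree argument to pass from \eqref{cs:nsr} to \eqref{cs:sr} via Proposition~\ref{prop:nilsr}---and you correctly flag that the continuity of the endpoint correspondence is the real issue, resolved by working with an adapted frame and the associated canonical (second-kind exponential) coordinates rather than with minimizers. Since you ultimately defer that technical core to \cite{Bellaiche1996}, your proposal and the paper end up at the same place: both cite Bella\"iche for the substance, with your write-up giving a more detailed roadmap of what that citation contains.
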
	
	
	Observe that the first inclusion follows directly from the definition of privileged coordinates.
	
	As a corollary of the Ball-Box Theorem, we get the following result on the regularity of the distance.
	
	\begin{cor}\label{cor:holder}
		Let $z=(z_1,\ldots,z_n)$ be a system of privileged coordinates at $q\in M$ for $\vett$. Then there exists $C,\eps>0$ such that 
		\[
			\frac 1 C |z(q')|\le \dsr(q,q')\le C |z(q')|^{\nicefrac{1}{r}},\qquad q'\in \bsr {q} \eps.
		\]
	\end{cor}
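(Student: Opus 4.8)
I would deduce both inequalities directly from the Ball-Box Theorem (Theorem~\ref{thm:srballbox}) by unwinding the definition~\eqref{def:box} of the boxes. Fix the constants $C,\eps_0$ supplied by that theorem, let $|\cdot|$ denote the Euclidean norm on $\real^n$, and note that, all norms on $\real^n$ being equivalent, one may estimate $|z(q')|$ componentwise through $|z_i(q')|\le|z(q')|\le\sqrt n\,\max_i|z_i(q')|$. I would let $q'$ range in a small sub-Riemannian ball $\bsr{q}{\eps}$ --- a genuine coordinate neighbourhood of $q$, since $\dsr$ induces the manifold topology and $z$ is a chart centred at $q$ --- shrinking $\eps$ as needed for the finitely many smallness requirements below.

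For the lower bound, set $\rho=\dsr(q,q')$. For every $\eps'\in(\rho,\eps_0)$ one has $q'\in\bsr{q}{\eps'}\subset\bbox{C\eps'}$, hence $|z_i(q')|\le(C\eps')^{w_i}$ for each $i$; letting $\eps'\downarrow\rho$ yields $|z_i(q')|\le(C\rho)^{w_i}$, and then, provided $C\eps\le1$ and since $w_i\ge1$, $|z_i(q')|\le C\rho$ for all $i$, so that $|z(q')|\le\sqrt n\,C\,\dsr(q,q')$. (In fact this half does not require the full Ball-Box Theorem: it already follows from $\ord_q(z_i)=w_i$, which gives $z_i(q')=\bigo(\dsr(q,q')^{w_i})$.)

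For the upper bound I would choose $\eta=C\,|z(q')|^{1/r}$ and check that $z(q')\in\bbox{\eta/C}$: since $w_i\le r$ one has $w_i/r\le1$, so for $|z(q')|\le1$ one gets $(\eta/C)^{w_i}=|z(q')|^{w_i/r}\ge|z(q')|\ge|z_i(q')|$, which is the claim. Then the first inclusion of the Ball-Box Theorem, applied with $\eps=\eta$ (legitimate once $\eta<\eps_0$, i.e.\ once $q'$ is close enough to $q$), gives $q'\in\bbox{\eta/C}\subset\bsr{q}{\eta}$, that is, $\dsr(q,q')\le\eta=C\,|z(q')|^{1/r}$. Finally I would take for the constant in the statement the maximum of $\sqrt n\,C$ and $C$, and $\eps$ small enough to accommodate everything above.

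There is essentially no obstacle here --- the corollary is a bookkeeping consequence of the theorem. The only points that need a moment's attention are the passage from open to closed balls in the lower bound, dealt with by the limit $\eps'\downarrow\rho$, and the monotonicity of $t\mapsto t^{w_i/r}$ on $[0,1]$ used in the upper bound, which is precisely where the exponent $1/r$ --- governed by the largest weight $w_n=r$ --- enters the estimate.
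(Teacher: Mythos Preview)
Your proof is correct and is precisely the intended deduction: the paper states Corollary~\ref{cor:holder} without proof, as an immediate consequence of Theorem~\ref{thm:srballbox}, and your unwinding of the box inclusions is the standard way to extract it. The same mechanism (inner box inclusion $\Rightarrow$ upper H\"older bound with exponent $1/r$, outer box inclusion $\Rightarrow$ linear lower bound) is spelled out in the paper only later, in the proof of Theorem~\ref{thm:holder}, and your argument matches that one in spirit and detail.
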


	\section{Time-dependent systems}\label{sec:td}
	
	\subsection{Time-dependent control systems}
	We now consider a more general situation.
	Namely, we consider on $M$ the time-dependent non-holonomic control system
	\begin{equation}\label{cs:td}\tag{TD}
		\dot q=\sum_{i=1}^m u_i\, f^t_i(q),\qquad q\in M,\quad u=(u_1,\ldots,u_m)\in\real^m, \quad t\in \inter,
	\end{equation}
	where $\inter=[0,b)$ for some $b\le+\infty$ and $\{f_1^t,\ldots,f_m^t\}$ is a family of non-autonomous smooth vector fields, with smooth dependence on the time parameter. We let $f_u^t=\sum_{i=1}^m u_i\,f_i^t$.
	
	In analogy with the autonomous case, we define \eqref{cs:td}-admissible curves as absolutely continuous curves $\gamma:[0,T]\subset\inter\to M$ such that $\dot\gamma(t)=f_{u(t)}^t(\gamma(t))$ for a.e. $t\in[0,T]$, for some control $u\in \lcont$. Observe, however, that contrary to what happens in the sub-Riemannian case, the \eqref{cs:td}-admissibility property is not invariant under time reparametrization, e.g., a time reversal. Thus, we define the cost (and not the length) of $\gamma$ to be 
	\[
		\cost(\gamma)=\min \|u\|_{\lcont},
	\]
	where the minimum is taken over all controls $u$ such that  $\gamma$ is associated with $u$ and is attained due to convexity. 
	The \em value function \em induced by the time-dependent system is then defined as
	\[
		\dtd(q,q')=\inf\{ \cost(\gamma)\colon\: \gamma \text{ is \eqref{cs:td}-admissible and } \gamma:q\rightsquigarrow q' \}.
	\]
	Clearly, the value function is non-negative. It is not a metric since, in general, it fails both to be symmetric and to satisfy the triangular inequality. 
	Moreover, as the  following example shows,  
	$\dtd$ could be degenerate. Namely, it could happen that $q\neq q'$ but $\dtd(q,q')=0$.
	
	\begin{example}\label{ex:td}
	Let $M=\real$, with coordinate $x$ and consider the vector field $f^t= {(1-t)^{-2}} \partial_x$ defined on $[0,1)$. 
	For any $x_0\in\real$, $x_0\neq 0$, and for any sequence $t_n\uparrow 1$,  let $u_n\in \xLone([0,t_n])$ be defined as $u_n\equiv  ({1-t_n}) {x_0} $. 
	By definition, each $u_n$ steers the system from $0$ to $x_0$. Hence,
\[
	\rho_1(0,x_0)\le \inf_{n\in\nat} \| u_n\|_{\xLone([0,t_n])}=   \inf_{n\in\nat}  \int_0^{t_n}  ({1-t_n}){x_0}\,dt = x_0 \inf_{n\in\nat} t_n (1-t_n) =0.
\]
	This proves that, for any $x_0\in\real$, $\rho_1(0,x_0)=0$.
	\end{example}
 
	For $T>0$, $q\in M$ and $\eps>0$, we denote the reachable set from $q$ with cost less than $\eps$ by 
	\[
		\btd {q} \eps  =\{q'\in M\colon\: \dtd(q,q')<\eps\}.
	\]
	We will also consider the reachable set from ${q}$ in time less than $T>0$ and cost less than $\eps$, and denote it by $\btdt {q} \eps T$. Clearly $\btdt {q} \eps T \subset \btd {q} \eps$.
	
	In general, the existence of minimizers for the optimal control problem associated with \eqref{cs:td} is not guaranteed. 
	We conclude this section with an example of this fact.
			
	\begin{example}
	Let $M=\real$, with coordinate $x$, and consider the vector field $f^t=e^{-t} \partial_x$ for $t\in [0,1)$.
	Fix $x_0\in\real$, $x_0\neq0$. Observe that, for any $T>0$ and any control $u\in \lcontr$ steering the system from $0$ to $x_0$ , it holds
	\begin{equation}\label{ex:xz}
		|x_0| = \left|\int_0^T u(t)e^{-t}\,dt \right| \le\int_0^T |u(t)|e^{-t}\,dt < \|u\|_{\lcontr}.
	\end{equation}
	This implies $\dtd(0,x_0)\ge |x_0|$. 
	Let now $u_n\in \xLone([0,1/n])$ be defined as $u_n(t)=x_0 n e^t$. Clearly $u_n$ steers the system from $0$ to $x_0$. Moreover,
	\[
		\dtd(0,x_0)\le \inf_{n\in\nat} \|u_n\|_{\xLone([0,1/n])} = |x_0| \inf_{n\in\nat} \frac{e^{\frac 1 n}-1}{\frac 1 n} = |x_0|.
	\]
	This proves that $\dtd(0,x_0) = |x_0|$. Hence, the non-existence of minimizers follows from \eqref{ex:xz}. 

	\end{example}
	
	\subsection{Finiteness and continuity of the value function}
	In this section, we extend the Chow--Rashevsky Theorem to time-dependent non-holonomic systems, under the strong H\"ormander condition, whose definition follows.  
	
	\begin{defn}\label{def:hor}
		We say that a family of time-dependent vector fields $\{\td t 1,\ldots, \td t m\}_{t\in \inter}$ satisfies the \em strong H\"ormander condition \em if $\{\td {t_0} 1,\ldots, \td {t_0} m\}$ satisfies the H\"ormander condition for any $t_0\in \inter$.
	\end{defn}
		
	As we will see later on in Section~\ref{sec:drift}, when considering families of time-dependent vector fields of the form $\td t i = (e^{-t\drift})_*f_i$ this condition is equivalent to the strong H\"ormander condition for the affine control system with drift $\drift$ and control vector fields $\vett$. 

    From now on we will assume that the following holds.
    \begin{equation}\tag{H$_{0}$}\label{hyp:shc}
    	\begin{split}
    		&\text{The family of smooth vector fields }\{\td t 1,\ldots, \td t m\}_{t\in \inter},\, \text{depends smoothly on } t \\
    		&\text{and satisfies the strong H\"ormander condition.}
    	\end{split}
    \end{equation}

	This section will be devoted to the proof of the following.
		
	\begin{thm}\label{prop:dsrp}
	Assume that $\{\td t 1,\ldots, \td t m\}_{t\in \inter}$ satisfies \eqref{hyp:shc}. Then, the function $\dtd:M\times M\rightarrow [0,+\infty)$ is continuous. Moreover, for any $t_0\in\inter$ and any $q,q'\in M$, letting $\dsr$ be the sub-Riemannian distance induced by $\{f_1^{t_0},\ldots,f_m^{t_0}\}$, it holds $\dtd(q,q')\le \dsr(q,q')$.
	\end{thm}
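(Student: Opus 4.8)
The plan is to prove the inequality $\dtd(q,q')\le\dsr(q,q')$ first — finiteness of $\dtd$ is then immediate, since the frozen family $\{f_1^{t_0},\ldots,f_m^{t_0}\}$ is bracket-generating and the classical Chow--Rashevsky theorem makes $\dsr$ finite on the connected manifold $M$ — and then to obtain continuity as the conjunction of upper and lower semicontinuity. For the inequality, fix $t_0\in\inter$, $q,q'\in M$ and $\delta>0$, and pick a curve $\gamma\colon[0,1]\to M$ joining $q$ to $q'$ and admissible for the \emph{frozen} system $\dot q=\sum_i u_if_i^{t_0}(q)$, associated with a control $v$ with $\|v\|_{\xLone}\le\dsr(q,q')+\delta$. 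The key point is that the $\xLone$ cost is invariant under time reparametrization, so the rescaled control $v_\tau(t)=\tau^{-1}v((t-t_0)/\tau)$ has the same norm on the short interval $[t_0,t_0+\tau]$, which lies in $\inter$ for $\tau$ small since $t_0<b$. Feeding $v_\tau$ into the genuine system \eqref{cs:td} on $[t_0,t_0+\tau]$ (preceded by the zero control on $[0,t_0]$, which keeps the trajectory at $q$), smoothness of $t\mapsto f_i^t$ gives $|f_i^t-f_i^{t_0}|\le L|t-t_0|$ on a compact neighbourhood of the image of $\gamma$, and a Gronwall estimate driven by $v_\tau$ — whose exponential factor stays bounded because $\|v_\tau\|_{\xLone}\le\dsr(q,q')+1$ — shows that the resulting \eqref{cs:td}-trajectory reaches, at time $t_0+\tau$, a point $p_\tau$ with $\dist(p_\tau,q')\le\eta(\tau)$, where $\eta(\tau)\to0$ as $\tau\to0$.

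It then remains to bridge $p_\tau$ and $q'$. For this I would establish a \emph{uniform quantitative controllability lemma}: for every $s_*\in\inter$ and every $p_*\in M$ there are a neighbourhood $J\ni s_*$ in $\inter$, a neighbourhood $U\ni p_*$ and constants $C,\rho_0>0$ such that any $p,p'\in U$ with $\dist(p,p')<\rho_0$ are joined by a \eqref{cs:td}-admissible curve starting at any prescribed time $s\in J$ and lasting an arbitrarily short time, with cost $\le C\,\dist(p,p')^{\nicefrac{1}{r}}$. This is the time-dependent counterpart of the quantitative Chow--Rashevsky theorem (equivalently, of the upper bound in Corollary~\ref{cor:holder}): one runs the composition-of-flows construction with iterated brackets of $f_1^{t_0},\ldots,f_m^{t_0}$, observes that over a short window each flow $e^{\sigma f_i^{t_0}}$ is realized, up to an error absorbed by an inverse-function (or fixed-point) argument, by a fast arc of \eqref{cs:td}, and uses stability of the bracket-generating condition under small perturbations of the base time to make the constants uniform. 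Applying this lemma at $s_*=t_0$, $p_*=q'$ to join $p_\tau$ to $q'$ during $[t_0+\tau,t_0+2\tau]$ costs at most $C\,\eta(\tau)^{\nicefrac{1}{r}}\to0$; letting first $\tau\to0$ and then $\delta\to0$ gives $\dtd(q,q')\le\dsr(q,q')$, and in particular $\dtd<+\infty$ everywhere.

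For continuity, upper semicontinuity at $(q,q')$ follows the same pattern: given $q_k\to q$, $q_k'\to q'$, run \eqref{cs:td} from $q_k$ with a near-optimal control for $q\rightsquigarrow q'$ on some $[0,T]$ with $T<b$; by continuous dependence on the initial datum (the control having bounded $\xLone$ norm) the endpoint is within $o(1)$ of $q_k'$, and the controllability lemma at $s_*=T$, $p_*=q'$ closes the residual gap at cost $o(1)$, so $\limsup_k\dtd(q_k,q_k')\le\dtd(q,q')+\delta$ for every $\delta>0$. Lower semicontinuity is the compactness direction: taking curves $\gamma_k\colon[0,T_k]\to M$ from $q_k$ to $q_k'$ with controls $u_k$, $\|u_k\|_{\xLone}=c_k\to\liminf_k\dtd(q_k,q_k')=:L$, one cannot invoke weak compactness of $\xLone$, so one passes to the vector measures $u_k\,dt$, which are bounded in total variation, extracts a weak-$*$ limit, and checks — after a priori bounds confining the curves to a compact set and via Helly's selection theorem — that the limiting trajectory is a bounded-variation curve from $q$ to $q'$ of total cost $\le L$, whose at most countably many jumps lie in the directions spanned by the $f_i^t$ and can therefore be replaced by fast \eqref{cs:td}-arcs; the unavoidable small errors are corrected as before by the controllability lemma (the degeneracy of $\dtd$ exhibited in Example~\ref{ex:td} shows that limit jumps accumulating at the endpoint $b$ of $\inter$ must be allowed, and are handled similarly), producing genuine \eqref{cs:td}-curves from $q$ to $q'$ of cost $\le L+\varepsilon$, hence $\dtd(q,q')\le L$.

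The main obstacle is the uniform quantitative controllability lemma together with its repeated use as an ``error sink''. Each perturbation of a trajectory in the argument — rescaling it onto a tiny time window, moving its initial point, or replacing a limit jump by a fast arc — displaces the endpoint by a small but nonzero amount, and since the time parametrization of \eqref{cs:td} cannot be changed without changing the vector fields $f_i^t$, trajectories cannot be reversed or freely concatenated to repair this; one is forced to absorb these errors via a controllability estimate that is uniform in the base time and stable under perturbation of the endpoints. The same non-reparametrizability is what makes the compactness step for lower semicontinuity delicate.
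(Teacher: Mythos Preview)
Your controllability lemma is essentially the paper's Lemma~\ref{lem:dsr} (proved there via a Sussmann-type perturbation of the switching end-point map, Lemma~\ref{lem:suss}, rather than via your Gronwall estimate, but with the same content: on any short window $[t_0,t_0+\tau]\subset\inter$ one can join nearby points by a \eqref{cs:td}-arc of cost at most $\dsr$). For the inequality $\dtd\le\dsr$ and for upper semicontinuity this is all that is needed, and your argument and the paper's are interchangeable.

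The divergence is in lower semicontinuity. You go through compactness: weak-$*$ limits of the control measures, a BV limit trajectory, resolution of its jumps by fast arcs, and the controllability lemma as an error sink. The paper does something much simpler and avoids compactness entirely. Given $q_k\to q'$ and near-optimal \eqref{cs:td}-curves $q\rightsquigarrow q_k$ on $[0,T_k]$, it simply \emph{appends} a \eqref{cs:td}-arc $q_k\rightsquigarrow q'$ on $[T_k,\tilde T_k]$ using Lemma~\ref{lem:dsr} at base time $T_k$; the appended cost is at most $\dsr(q_k,q')+\eps\to\eps$, giving $\dtd(q,q')\le\liminf\dtd(q,q_k)$ directly. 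For the first variable, where one cannot prepend, the paper introduces $\varphi_\eps(p)$ as the optimal cost from $p$ to $q'$ among curves starting at time $\eps$, shows $\varphi_\eps(p)\to\dtd(p,q')$ as $\eps\downarrow0$, and bridges $q_k\rightsquigarrow q$ on $[0,\eps]$ before concatenating with a near-optimizer for $\varphi_\eps(q)$. The controllability lemma you already have is the only tool required.

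Your compactness route, besides being unnecessary, has genuine gaps as stated. There is no a priori bound on the $T_k$, so one cannot fix a time interval on which to take weak-$*$ limits; the notion of ``BV trajectory driven by a vector measure'' for a nonlinear system requires a graph-completion or equivalent construction that you do not set up; an atom of the limit measure produces a jump along the \emph{flow} of $\sum a_if_i^t$, not merely in the direction $\sum a_if_i^t(p)$; and replacing a jump by a fast arc shifts all subsequent times, which alters the vector fields $f_i^t$ and creates errors of the same order you are trying to absorb. Finally, mass accumulating at $b=\sup\inter$ is not ``handled similarly'', since your lemma is stated only at interior base times $s_*\in\inter$. None of this is needed once you observe that the controllability lemma, applied at the terminal time of each approximating curve, already closes the gap.
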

	
	Now, we need to introduce some notation. 
	Following \cite{Sachkov2004}, the flows between times $s,t\in\real$ of an autonomous vector field $f$ and of a non-autonomous vector field $\tau\mapsto f^\tau$ will be denoted by, respectively,
	\[
		e^{(t-s)f}:M\to M\quad\text{and} \quad \crex_s^t f^\tau\,d\tau: M\to M.
	\]

	Fix $q\in M$ and assume, for the moment, that $t_0=0$. Let $\ell\in\nat$ and $\mathcal F=({i_1},\ldots,{i_\ell})\in\{1,\ldots,m\}^\ell$.
	For  any $\epmtime\in\inter$, $\epmtime>0$, we define the \em switching end-point  map \em at time $\epmtime$ and associated with $\mathcal F$ to be the function $E_{\epmtime,\mathcal F}:\real^\ell \rightarrow M$ defined as 
	\begin{equation}\label{eq:epm}
		\begin{split}
		E_{\epmtime,\mathcal F}(\xi)&=\crex_{\frac{\ell-1}{\ell} \epmtime}^\epmtime\frac{1} \epmtime \xi_\ell\, f^\tau_{i_\ell}\,d\tau \circ \cdots \circ \crex_0^{\frac \epmtime \ell} \frac{1} \epmtime\xi_1\, f_{i_1}^\tau\,d\tau \,(q)\\
		&=\crex_{\frac{\ell-1}{\ell}}^1  \xi_\ell\, f^{{\tau}{\epmtime}}_{i_\ell}\,d\tau \circ \cdots \circ \crex_0^{\frac 1 \ell} \xi_1\, f_{i_1}^{{\tau}{\epmtime}}\,d\tau \,(q).\\
		\end{split}
	\end{equation}
	Here we applied a standard change of variables formula for non-autonomous flows. 
	Let then
	\begin{equation}\label{eq:gtau}
		g^\tau_{\epmtime,\mathcal F} = 
		\begin{cases}
			\xi_1\, f_{i_1}^{\tau\epmtime} & \text{if } 0\le \tau<{1}/{\ell},\\
			\xi_2\, f_{i_2}^{(\tau-1/\ell) \epmtime} & \text{if } 1/\ell\le \tau<{2}/{\ell},\\
			\qquad \vdots\\
			\xi_\ell\, f_{i_\ell}^{(\tau-(\ell-1)/\ell) \epmtime} & \text{if } (\ell-1)/\ell\le \tau<{1},\\
		\end{cases}
	\end{equation}
	so that we can write
	\[	
		E_{\epmtime,\mathcal F}(\xi)=\crex_0^1 g_{\epmtime,\mathcal F}^\tau(\xi)\,d\tau\,(q).
	\]
	Clearly, $t\mapsto \crex_0^t g^\tau_{\epmtime,\mathcal F}(\xi)\,d\tau\,(q)$, $t\in[0,1]$, is a \eqref{cs:td}-admissible trajectory. Thus, $E_{\epmtime,\mathcal F}(\xi)$, $T>0$, is the end-point of a piecewise smooth \eqref{cs:td}-admissible curve. 
	
	We recall that, by the series expansion of $\overrightarrow{\text{exp}}$ (see \cite{Sachkov2004}), for any non-autonomous smooth vector field $f^\tau$, it holds $\crex_0^t f^\tau\,d\tau\,(q)=e^{t\, f^0}(q)+\bigo(t^2)$. Thus, we can define 
	\[
		E_{0,\mathcal F}(\xi)=\lim_{\epmtime\downarrow 0} E_{\epmtime,\mathcal F}(\xi)= e^{\xi_\ell\,f_\ell^0}\circ\ldots\circ e^{\xi_1\,f_1^0}(q)=\crex_0^1 {g^\tau_{0,\mathcal F}(\xi)}\,d\tau\,(q),
	\]
	where, $g^\tau_{0,\mathcal F}(\xi)$ is defined in \eqref{eq:gtau}.
	Then $t\mapsto \crex_0^t{g^\tau_{0,\mathcal F}(\xi)}\,d\tau\, (q)$, $t\in[0,1]$, is an \eqref{cs:sr}-admissible curve for the sub-Riemannian structure defined by $\{f_1^0,\ldots,f_m^0\}$ and $E_{0,\mathcal F}(\xi)$ is the end-point of a piecewise smooth trajectory in \eqref{cs:sr}. 
        
	After \cite{Sussmann1976}, we say that a point $q'\in M$ is \em \eqref{cs:td}-reachable \em from $q$ at time $t_0=0$, if there exist $\ell\in\nat$, $\mathcal F\in \{1,\ldots,m\}^\ell$, $\epmtime>0$ and $\xi\in\real^\ell$, such that $E_{\epmtime,\mathcal F}(\xi)=q'$. In this case it is clear that $\dtd(q,q')\le \sum_i |\xi_i|$. Moreover, if $\xi'\mapsto E_{\epmtime,\mathcal F}(\xi')$ has rank $n$ at $\xi$, the point $q'$ is said to be \em \eqref{cs:td}-normally reachable \em at time $t_0=0$.
    Finally, the point $q'$ is said to be \em \eqref{cs:sr}-reachable \em or \em \eqref{cs:sr}-normally reachable \em for the vector fields $\{f_1^0,\ldots,f_m^0\}$, if these properties holds for $\epmtime=0$.
        
    In the case $t_0> 0$, taking $\epmtime>0$ such that $\epmtime+t_0\in\inter$ and changing the interval of integration in \eqref{eq:epm} from $[0,\epmtime]$ to $[t_0,t_0+\epmtime]$, it is clear how to define \em \eqref{cs:td}-reachable \em and \em \eqref{cs:td}-normally reachable \em points from $q$ at time $t_0$, and \em \eqref{cs:sr}-reachable \em and \em \eqref{cs:sr}-normally reachable \em points for the vector fields $\{f_1^{t_0},\ldots,f_m^{t_0}\}$.
        
	The proof of the following lemma is an adaptation of \cite[Lemma 3.1]{Sussmann1976}. 
	
\begin{lem}\label{lem:suss}
      Let $q'\in M$ be \eqref{cs:sr}-normally reachable for the vector fields $\{f_1^{t_0},\ldots,f_m^{t_0}\}$ from $q$, by some $\ell\in\nat$, $\xi\in\real^\ell$ and $\mathcal F\in\{1,\ldots,m\}^\ell$. 
      Then, there exist $\eps_0,\epmtime_0>0$ such that, for any $\eps<\eps_0$, the point $q'$ is \eqref{cs:td}-normally reachable at time $t_0$, by the same $\ell$ and $\mathcal F$, and some  $\xi'\in \real^\ell$, with $\sum_j |\xi_j-\xi'_j|\le\eps$, and any $\epmtime<\epmtime_0$.
\end{lem}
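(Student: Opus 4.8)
The plan is to apply the implicit function theorem to the family of switching end-point maps $\xi'\mapsto E_{\epmtime,\mathcal F}(\xi')$, treating the final-time parameter $\epmtime\ge 0$ as a deformation parameter whose value $\epmtime=0$ recovers the frozen sub-Riemannian end-point map $E_{0,\mathcal F}$, and then to deduce normal reachability of the perturbed solution from lower semicontinuity of the rank. This is the time-dependent analogue of the stability-under-perturbation argument of \cite[Lemma 3.1]{Sussmann1976}, with $\epmtime$ in the role of the perturbation.

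First I would record the regularity of the end-point map: $(\epmtime,\xi')\mapsto E_{\epmtime,\mathcal F}(\xi')$ is of class $C^\infty$ on a neighbourhood of $\{0\}\times\real^\ell$ in $[0,+\infty)\times\real^\ell$. Indeed, after the rescaling in \eqref{eq:epm} one has $E_{\epmtime,\mathcal F}(\xi')=\crex_0^1 g^\tau_{\epmtime,\mathcal F}(\xi')\,d\tau\,(q)$ with the time-varying field $g^\tau_{\epmtime,\mathcal F}(\xi')$ of \eqref{eq:gtau} depending smoothly and jointly on $(\tau,\epmtime,\xi')$ up to $\epmtime=0$; smooth dependence of ODE flows on parameters and initial data then gives the claim, and at $\epmtime=0$ one obtains $E_{0,\mathcal F}=e^{\xi_\ell\,f_\ell^{t_0}}\circ\cdots\circ e^{\xi_1\,f_1^{t_0}}(q)$, i.e. exactly the piecewise-smooth end-point map of the frozen system \eqref{cs:sr} associated with $\{f_1^{t_0},\dots,f_m^{t_0}\}$. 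For $t_0>0$ one replaces $[0,\epmtime]$ by $[t_0,t_0+\epmtime]$ throughout, nothing else changing.

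Next, fix local coordinates on $M$ centred at $q'$ and set $G(\epmtime,\xi')=E_{\epmtime,\mathcal F}(\xi')\in\real^n$; by hypothesis $G(0,\xi)=0$ and the linear map $\partial_{\xi'}G(0,\xi)\colon\real^\ell\to\real^n$ is onto, so $\ell\ge n$ and there is $J\subset\{1,\dots,\ell\}$ with $|J|=n$ for which the $n\times n$ block $\partial_{\xi'_J}G(0,\xi)$ is invertible. Freezing $\xi'_j=\xi_j$ for $j\notin J$ and applying the implicit function theorem to $(\epmtime,\xi'_J)\mapsto G(\epmtime,\xi'_J,\xi_{J^c})$ at $(0,\xi_J)$ — which causes no difficulty in the parameter $\epmtime\in[0,\epmtime_0)$ since $G$ is smooth up to $\epmtime=0$ — produces $\epmtime_0>0$ and a $C^\infty$ curve $\epmtime\mapsto\xi'_J(\epmtime)$ with $\xi'_J(0)=\xi_J$ and $E_{\epmtime,\mathcal F}(\xi'(\epmtime))=q'$ throughout, so in particular $\dtd(q,q')\le\sum_j|\xi'_j(\epmtime)|$. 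Since $\epmtime\mapsto\xi'_J(\epmtime)$ is continuous, after shrinking $\epmtime_0$ (depending on $\eps$) one gets $\sum_j|\xi_j-\xi'_j(\epmtime)|=\sum_{j\in J}|\xi_j-\xi'_j(\epmtime)|\le\eps$. Finally, the rank of $\xi'\mapsto E_{\epmtime,\mathcal F}(\xi')$ is lower semicontinuous in $(\epmtime,\xi')$ and equals $n$ at $(0,\xi)$, hence equals $n$ at $(\epmtime,\xi'(\epmtime))$ for $\epmtime$ small; a last shrink of $\epmtime_0$ makes $q'$ \eqref{cs:td}-normally reachable at time $t_0$ by the same $\ell,\mathcal F$ and by $\xi'(\epmtime)$, for every $\epmtime<\epmtime_0$.

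The only point requiring care is the joint smoothness of $E_{\epmtime,\mathcal F}$ up to $\epmtime=0$ — not merely the continuity $\lim_{\epmtime\downarrow 0}E_{\epmtime,\mathcal F}=E_{0,\mathcal F}$ already noted before the statement — and this is precisely what the rescaled representation \eqref{eq:epm}--\eqref{eq:gtau} is designed to supply, via the series expansion of $\crex$ from \cite{Sachkov2004}. With it in hand the rest is a routine application of the implicit function theorem and lower semicontinuity of the rank, and the argument is insensitive to whether $\ell=n$ or $\ell>n$ thanks to the choice of the block $J$.
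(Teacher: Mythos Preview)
Your proof is correct and follows the same overall strategy as the paper: treat $\epmtime$ as a deformation parameter, use that $E_{\epmtime,\mathcal F}\to E_{0,\mathcal F}$ in a strong enough sense, and deduce both that $q'$ stays in the image and that the rank stays equal to $n$. The technical implementation differs, however. The paper argues topologically: it takes a small ball $B$ around $\xi$ on which $E_{0,\mathcal F}$ is a homeomorphism onto a neighbourhood of $q'$, invokes $C^1$ convergence of $E_{\epmtime,\mathcal F}$ to $E_{0,\mathcal F}$ to keep the rank equal to $n$ on $B$, and then uses uniform convergence together with the homeomorphism property to conclude that $E_{\epmtime,\mathcal F}(B)$ still covers $q'$. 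You instead establish joint smoothness of $(\epmtime,\xi')\mapsto E_{\epmtime,\mathcal F}(\xi')$ up to $\epmtime=0$, select an $n\times n$ invertible block $J$ of $\partial_{\xi'}E_{0,\mathcal F}(\xi)$, and apply the implicit function theorem to produce an explicit curve $\epmtime\mapsto\xi'(\epmtime)$ hitting $q'$. Your route requires a slightly stronger input (joint smoothness rather than mere $C^1$ convergence) but is cleaner about the case $\ell>n$, which the paper's phrasing ``$E_{0,\mathcal F}$ maps diffeomorphically a neighborhood of $B$'' glosses over. One cosmetic point: $g^\tau_{\epmtime,\mathcal F}$ is only piecewise smooth in $\tau$, not jointly smooth in $(\tau,\epmtime,\xi')$ as you write; but this is harmless, since smoothness of the end-point map in $(\epmtime,\xi')$ follows either from Carath\'eodory-type parameter dependence or, more directly, by writing $E_{\epmtime,\mathcal F}$ as a composition of $\ell$ flows each smooth in $(\epmtime,\xi_k)$.
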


\begin{proof}
  Without loss of generality, we assume $t_0=0$.

  Let $U\subset \real^\ell$ be a neighborhood of $\xi$ such that $E_{0,\mathcal F}$ has still rank $n$ when restricted to it. Then, there exists $B=\{x\colon\: \sum_j|x_j-\xi_j|\le\eps\}\subset U$ such that  $E_{0,\mathcal F}$ maps diffeomorphically a neighborhood of $B$ in $U$ onto a neighborhood of $q$. It follows, from standard properties of differential equations, that, for $\epmtime>0$ sufficiently small, the map $ E_{\epmtime,\mathcal F}$ is well defined on $B$ and that $E_{\epmtime,\mathcal F}\longrightarrow E_{0,\mathcal F}$ as $\epmtime\downarrow 0$ in the $C^1$-topology over $B$. Thus, there exists $\epmtime_1>0$ such that, for $\epmtime<\epmtime_1$, $E_{\epmtime,\mathcal F}$ has rank $n$ at every point of $B$. 

  Since the map $E_{0,\mathcal F}$ is an homeomorphism from $B$ onto a neighborhood of $q$, and $E_{\epmtime,\mathcal F}\longrightarrow E_{0,\mathcal F}$ uniformly as $\epmtime\downarrow 0$,  it follows that there exists a fixed neighborhood $V$ of $q$ and $\epmtime_2>0$ such that $V\subset E_{\epmtime,\mathcal F}(B)$, for any $\epmtime<\epmtime_2$. Then, for any $\epmtime<\min\{\epmtime_1,\epmtime_2\}$, there exists $\xi'\in B$ such that the point $q'=E_{\epmtime,\mathcal F}(\xi')$ is \eqref{cs:td}-normally reachable.
\end{proof}

We will use the following consequence of Lemma~\ref{lem:suss}. 
We remark that the result holds even if $\{f_1^{t},\ldots,f_m^{t}\}_{t\in\inter}$ satisfies the Hörmander condition only at time $t_0\in\inter$.

\begin{lem}\label{lem:dsr}
 	Let $\dsr$ be the sub-Riemannian distance induced by $\{f_1^{t_0},\ldots,f_m^{t_0}\}$, then for any $t_1\in \inter$, such that $t_1-t_0>0$ is sufficiently small, and for any $q,q'\in M$ it holds 
 	\[
 		\inf\{ \cost(\gamma)\colon\: \gamma:[t_0,t_1]\to M \text{ is \eqref{cs:td}-admissible,}\, \gamma(t_0)=q \text{ and } \gamma(t_1)=q' \}\le \dsr(q,q').
 	\]
 	In particular, $\dtd(q,q')\le \dsr(q,q')$.
\end{lem}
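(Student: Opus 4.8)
The plan is to establish the displayed inequality by approximating a sub-Riemannian minimizer for $\{f_1^{t_0},\ldots,f_m^{t_0}\}$ by piecewise-smooth \eqref{cs:td}-admissible curves whose cost is controlled, using Lemma~\ref{lem:suss}. First I would reduce to the case $t_0=0$ and reduce to proving the inequality up to an arbitrarily small error: it suffices to show that for every $\eta>0$ and every $q,q'$ sufficiently close, there is a \eqref{cs:td}-admissible curve on $[0,t_1]$ from $q$ to $q'$ with cost at most $\dsr(q,q')+\eta$, since taking $\eta\downarrow 0$ then yields the claim, and the final assertion $\dtd(q,q')\le\dsr(q,q')$ follows immediately from the definition of $\dtd$ as an infimum (any admissible curve on $[t_0,t_1]$ is a competitor).

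The key step is the construction of the approximating curve. Given $q,q'$ with $\dsr(q,q')<\infty$, pick an \eqref{cs:sr}-admissible curve $\gamma$ for $\{f_1^{0},\ldots,f_m^{0}\}$ joining $q$ to $q'$ with $\len(\gamma)\le\dsr(q,q')+\eta/2$. Using the fact that \eqref{cs:sr}-reachable points are dense among those reachable by normally reachable points — more precisely, that one can perturb $\gamma$ slightly so that $q'$ becomes \eqref{cs:sr}-normally reachable from $q$ by some $\ell$, $\mathcal F$ and $\xi$ with $\sum_j|\xi_j|\le\len(\gamma)+\eta/2\le\dsr(q,q')+\eta$ (this is the standard Sussmann-type density argument: concatenate the controlled pieces of $\gamma$ with a small bracket-generating ``wiggle'' that makes the switching end-point map full rank without increasing the $\ell^1$-cost of $\xi$ by more than $\eta/2$) — I put the system in the form covered by $E_{0,\mathcal F}$. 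Then Lemma~\ref{lem:suss} applies: for all sufficiently small $\epmtime$ there is $\xi'$ with $\sum_j|\xi_j-\xi'_j|$ as small as we like such that $q'=E_{\epmtime,\mathcal F}(\xi')$, i.e., $q'$ is \eqref{cs:td}-reachable from $q$ along a piecewise smooth \eqref{cs:td}-admissible curve defined on $[0,\epmtime]$, with $\dtd(q,q')\le\sum_j|\xi'_j|\le\sum_j|\xi_j|+\eta\le\dsr(q,q')+2\eta$ (adjusting constants). Choosing $\epmtime<t_1-t_0$ (possible since $t_1-t_0>0$), and noting we may prolong the curve to be constant-in-a-reparametrized-sense on the remaining time — or simply observe $\btdt q \eps T \subset \btd q \eps$ and that the curve already lives in $[0,\epmtime]\subset[t_0,t_1]$ — gives a curve realizing the infimum on the left-hand side below $\dsr(q,q')+2\eta$.

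Finally I would let $\eta\downarrow 0$ to conclude both the displayed inequality and, a fortiori, $\dtd(q,q')\le\dsr(q,q')$. The main obstacle is the normal-reachability reduction: an arbitrary sub-Riemannian minimizer need not be normally reachable, so one must argue that minimizers can be approximated in length by concatenations that are normally reachable (equivalently, that the set of \eqref{cs:sr}-normally reachable points is dense in every sub-Riemannian ball, with matching cost). This is where the H\"ormander condition at $t_0$ is used, and it is exactly the ingredient underlying Chow--Rashevsky; I expect it to be handled by an open-mapping/Baire-type argument combined with the observation that adding a short bracket-generating loop costs arbitrarily little in $\ell^1$-norm of the control. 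Once that density statement is in hand, Lemma~\ref{lem:suss} transfers everything to the time-dependent setting with a cost loss that vanishes as $\epmtime\downarrow 0$, and the proof is essentially bookkeeping of the error terms $\eta$.
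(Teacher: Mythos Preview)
Your proposal is correct and follows essentially the same route as the paper: reduce to showing the inequality up to an arbitrary $\eta>0$, use the density of \eqref{cs:sr}-normally reachable points with nearly optimal $\ell^1$-cost (the paper cites \cite[Lemma~3.21]{Agrachev2010} for this, which is precisely the Sussmann-type density you outline), and then invoke Lemma~\ref{lem:suss} to transfer normal reachability to the time-dependent system with a controlled cost loss, finally letting $\eta\downarrow 0$. Your treatment of the time interval is slightly more careful than the paper's: since $u\equiv 0$ keeps a \eqref{cs:td}-trajectory at rest, extending by zero control on $[\epmtime,t_1]$ is indeed harmless, though one can also simply take $\epmtime=t_1-t_0$ directly, as Lemma~\ref{lem:suss} allows any $\epmtime<\epmtime_0$.
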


\begin{proof}
	Fix $\eps>0$. By Chow's theorem it is clear that $q'$ is \eqref{cs:sr}-reacheable from $q$. 
	Moreover, since there exist \eqref{cs:sr}-normally reachable points from $q'$ arbitrarily close to $q'$ (see e.g., \cite[Lemma 3.21]{Agrachev2010}), follows that $q'$ is always \eqref{cs:sr}-normally reacheable from $q$ by $\xi$ such that $\sum_j |\xi_j|\le \dsr(q,q')+\eps/2$.
	Hence, by Lemma~\ref{lem:suss}, if $\eps$ and $\eta>0$ are sufficiently small, we have that $q'$ is \eqref{cs:td}-normally reachable from $q$ at time $t_0$ by $\xi'$ such that $\sum_j |\xi'_j|\le \dsr(q,q')+ \eps$ and $T<t_1$. This clearly implies that
	\[
		\inf\{ \cost(\gamma)\colon\: \gamma \text{ is \eqref{cs:td}-admissible,}\, \gamma(t_0)=q \text{ and } \gamma(t_1)=q' \}\le \dsr(q,q')+\eps.
	\]
	Finally, the lemma follows letting $\eps\downarrow0$.
\end{proof}

We now prove the main theorem of the section.

\begin{proof}[Proof of Theorem~\ref{prop:dsrp}]
	By Lemma~\ref{lem:dsr}, we only need to prove the continuity of $\dtd$. 
	We will prove only the lower semicontinuity, sinche the upper semicontinuity follows by similar arguments.
	
	We start by proving the lower semicontinuity of $\dtd(q,\cdot)$ at $q'$.
	Consider a sequence $q_k\rightarrow q'$ and let $u_k\in \xLone([0,T_k],\real^m)$ be controls such that each one steers system \eqref{cs:td} from $q$ to $q_k$ and $\liminf_n \dd(q,q_k)=\liminf_n \|u_k\|_{\xLone}$. 
	Then, by Lemma~\ref{lem:dsr}, for any $\eps>0$ there exists a sequence of $\tilde T_k >0$ and a sequence of controls $v_k\in \xLone([T_k,\tilde T_k],\real^m)$ all steering system \eqref{cs:td} from $q_k$ to $q'$ and such that $\|v_k\|_{\xLone([T_k,\tilde T_k],\real^m)}\le \dsr(q_k,q')+\eps$. 
	Since $\dsr(q_k,q')\rightarrow 0$, this implies that 
		\[
			\dtd(q,q')\le \lim_{n\rightarrow\infty} \bigg( \|u_k\|_{\xLone([0,T_k],\real^m)}+\|v_k\|_{\xLone([T_k,\tilde T_k],\real^m])} \bigg) = \liminf_n \dtd(q,q_k) +\eps.
		\]
	Letting $\eps\downarrow 0$ proves that $\dtd(q,\cdot)$ is lower semicontinuous at $q'$.

	In order to prove the lower semicontinuity of $\dtd(\cdot, q')$ at $q$, let us define
	\[
		\varphi_\eps(p)=\inf\{ \cost(\gamma)\colon\: \gamma:[\eps,T]\subset\inter\to M \text{ is \eqref{cs:td}-admissible and } \gamma:p\rightsquigarrow q' \}.
	\]
	We claim that for any $p\in M$ it holds that $\varphi_\eps(p)\longrightarrow\dtd(p,q')$ as $\eps\downarrow0$. 
	Since it is clear that $\varphi_\eps(\cdot)\ge\dtd(\cdot,q')$, it suffices to prove that 
	\begin{equation}\label{eq:limphi}
		\lim_{\eps\downarrow 0} \varphi_\eps(p) \le \dtd(p,q')\qquad \text{for any } p\in M.
	\end{equation}
	To this aim, fix $p\in M$ and $\eta>0$ and let $\gamma:[0,T]\to M$ be such that $\cost(\gamma)\le\dtd(p,q')+\eta$.
	It is clear that $\gamma(2\eps)\rightarrow p$ as $\eps\downarrow0$, and hence that $\dtd(p,\gamma(2\eps))\rightarrow 0$ as $\eps\downarrow 0$, by the first part of the proof.
	Thus, for any $\eps>0$ sufficiently small, there exists a \eqref{cs:td}-admissible curve $\gamma_\eps:[\eps,2\eps]\to M$ such that $\gamma_\eps:p\rightsquigarrow\gamma(2\eps)$ and $\cost(\gamma_\eps)\le \dtd(p,\gamma(2\eps))+\eta$.
	By concatenating $\gamma_\eps$ with $\gamma|_{[2\eps,T]}$, we get that
	\[
		\varphi_\eps(p)\le \cost(\gamma_\eps)+\cost(\gamma)\le \dtd(p,\gamma(2\eps)) +\dtd(p,q')+2\eta.
	\]
	Letting $\eps\downarrow 0$ and then $\eta\downarrow 0$, this proves \eqref{eq:limphi} and thus the claim.

	Let now $q_k\rightarrow q$ and fix $\eta>0$. 
	By Lemma~\ref{lem:dsr} this implies that $\dtd(q_k,q)\rightarrow 0$ and that for any $\eps>0$ sufficiently small, there exists a \eqref{cs:td}-admissible curve $\gamma_\eps:[0,\eps]\to M$ such that $\gamma_\eps:q_k\rightsquigarrow q$ and $\cost(\gamma_\eps)\le\dtd(q_k,q)+\eta$.
	Hence
	\[	
		\dtd(q_k,q')\le c(\gamma_\eps)+\varphi_\eps(q) \le\dtd(q_k,q)+\varphi_\eps(q)+\eta.
	\]
	By the previous claim, letting $\eps,\eta\downarrow 0$, this implies that $\dtd(q_k,q')\le \dtd(q_k,q)+\dtd(q,q')$. 
	Since $\dtd(q_k,q)\rightarrow 0$, taking the liminf as $k\rightarrow+\infty$, this proves the lower semicontinuity of $\dtd(\cdot,q')$ at $q$, completing the proof.
\end{proof}

\begin{rmk}
    From the proof of Theorem~\ref{prop:dsrp}, it follows that hypothesis \eqref{hyp:shc} is not sharp.
    Indeed, the following is sufficient to prove the theorem.
    \begin{equation}\tag{H$_{1}$}\label{hyp:swhc}
    	\begin{split}
    		&\text{The family of smooth vector fields }\{\td t 1,\ldots, \td t m\}_{t\in \inter},\, \text{depends smoothly on } t,\, \text{}\\
    		&\text{and satisfies the strong H\"ormander condition at $t=0$ and in an open}\\
    		&\text{neighborhood of } \sup \inter.
    	\end{split}
    \end{equation}
    We will conclude this section by showing that, in our framework, it is necessary to assume the Hörmander condition on both ends of $\inter$.
    Although outside the scope of the present work, we remark that stronger assumptions on the regularity of the vector fields, i.e., that they are uniformly Lipschitz, would allow to prove Theorem~\ref{prop:dsrp} assuming only that $\{\td t 1, \ldots, \td t m \}_{t\in\inter}$ satisfies the Hörmander condition at one time $t_0\in\inter$.
\end{rmk}

The following example proves that if the family $\{\td t 1,\ldots, \td t m\}_{t\in \inter}$ satisfies the  Hörmander condition only near $t=0$, then the value function is in general not continuous. 
Through a slight modification, the same argument can also be used to prove that the same holds if the  Hörmander condition is satisfied only at a neighborhood of $\sup\inter$ or of any $t_0\in \inter$. 
	
\begin{example}\label{ex:lip}	
	\begin{figure}
	     \includegraphics[width=0.5\textwidth]{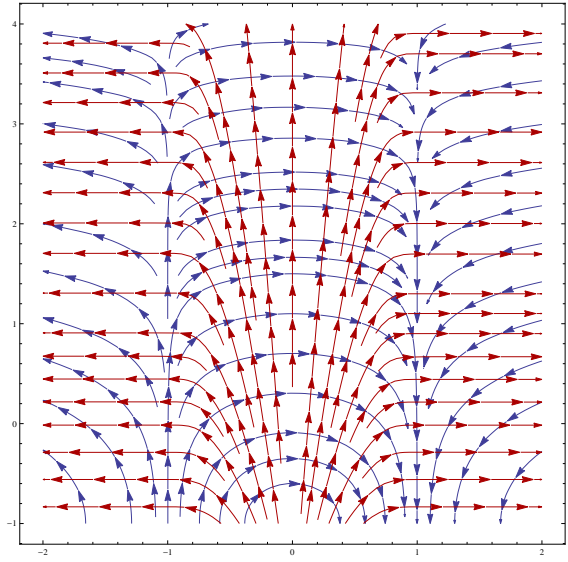}
	   \caption{The two vector fields of Example~\ref{ex:lip} with $h(x)=c\,e^{-\frac 1 {1-x^2}}$ for $x\in [-1,1]$.}
	   \label{fig:ciao}
	\end{figure}
	Let $M=(-2,2)\times(-1,+\infty)$, with coordinates $(x,y)$, and consider the vector fields 
	\[
		f(x,y)= \frac {((y+1)(1-x^2),-x)} {\sqrt{(y+1)^2(1-x^2)^2+x^2}} ,
	    \qquad g(x,y)=\frac {\big(x,h(x)(y+2)\big)}{\sqrt{x^2+h(x)^2(y+2)^2}},
	\]
	where $h:[-2,2]\to\real$ is a smooth cutoff function such that $\supp h\subset [-1,1]$, $h\ge 0$ and $h(0)=1$ (see Figure~\ref{fig:ciao}).
	Fix $0<\eps<1$, $C\ge 16$ and let $\phi,\,\psi:[0,1]\to\real$ be two smooth functions such that  
	\[
		\phi(t)=
		\begin{cases}
		    1 &\quad \text{if } 0\le t \le \eps, \\
		    0 &\quad \text{if } 2\eps\le t \le 1, \\
		\end{cases} \qquad
		\psi(t)=
		\begin{cases}
		    1 &\quad \text{if } 0\le t \le 2\eps, \\
		    C &\quad \text{if } 3\eps\le t \le 1, \\
		\end{cases}
	\]
	and such that $\phi$ is nonincreasing while $\psi$ is nondecreasing.
	Finally, consider the time-dependent system on $M$ specified by the vector fields $f^t(x,y)=\phi(t)f(x,y)$ and $g^t(x,y)=\psi(t)g(x,y)$, $t\in[0,1]$. 
	We will show that $\{f^t,g^t\}$ satisfies the Hörmander condition for $t\in[0,\eps]$, but that the value function associated with the family $\{f^t,g^t\}_{t\in[0,1]}$ is not lower semicontinuous.

	We start by showing that $f(p)$ and $g(p)$ are transversal for any $p=(x,y)\in M$, implying the Hörmander condition for $\{f^t,g^t\}$, $t\in[0,\eps]$.
	If $x\in (-2,-1]\cup[1,2)$, then, by definition of $h$,  $g(p)=(1,0)$ is clearly transversal to $f(p)$.
	On the other hand, if $x\in (-1,1)\setminus \{0\}$ and $g(p)$ is parallel to $f(p)$, a simple computation shows that $h(x)<0$, which is a contradiction. 
	Finally, for $x=0$, it is clear that $g(p)=(0,y+2)$ and $f(p)=(y+1,0)$ are never parallel.
	We remark that this implies also that the value function $\rho_{\eps}$, induced by controls defined on $[0,\eps]$, is a distance equivalent to the Euclidean one.
	In particular, $|p_1-p_2|\le2\rho_\eps(p_1,p_2)$ for any $p_1,p_2\in M$.

	Fix now $q'=(1,0)$.
	The set of points from which $q'$ is reachable using only $f$ is exactly $\mathcal O_{q'}=\{ (1,y)\colon\: y>-1\}$. 
	Let then $q_0\in (-1,0)\times\{0\}$ be such that $\rho_\eps(q_0,(-1,0))\le \frac 1 4 \min_{p\in \mathcal O_{q'}} \rho_\eps(q_0,p)$. 
	In order to show that $\rho_1(q_0,\cdot)$ is not lower semicontinuous at $q'$, consider any sequence $\{q_n\}_{n\in\mathbb N}\subset(1/2,1)\times\{0\}$ such that $q_n\longrightarrow q'$.
	By continuity of $\rho_\eps$ and the fact that $-q_n\longrightarrow (-1,0)$, we can always assume that, up to subsequences, $\rho_\eps(q_0,-q_n)\le \frac 1 2 \min_{p\in \mathcal O_{q'}} \rho_\eps(q_0,p)$.
	
	Since $g^t\equiv 0$ for $t\ge2\eps$, 
	if $u\in \xLone([0,1],\mathbb R^2)$ is a control steering the system from $q_0$ to $q'$, the control $u|_{[0,2\eps]}$ steers the system from $q_0$ to some $p\in\mathcal O_{q'}$. Exploiting the fact that $\rho_{2\eps}\ge\rho_\eps$ by monotonicity of $\psi$, this implies that
	\begin{equation}\label{eq:rho1}
		\rho_1(q_0,q')\ge \min_{p\in \mathcal O_{q'} } \rho_\eps(q_0,p) \ge 2 \rho_\eps(q_0,-q_n).
	\end{equation}

	Let now $u\in \xLone([0,1],\mathbb R^m)$ be the control constructed as follows.
	From time $0$ to $\eps$, $u|_{[0,\eps]}$ is the minimizer of $\rho_\eps$ steering the system from $q_0$ to $-q_n$.
	Then, $u|_{(\eps,3\eps)}\equiv 0$ and, after this, the control acts only on $f^t$ for time $t\in[3\eps,1]$, steering the system from $-q_n$ to $q_n$.
	Hence,
	\begin{equation}\label{eq:rho2}
		|q_n-(-q_n)|=\left| \int_{3\eps}^1 u(t)f^t(x(t),y(t))\,dt \right| =C \int_{3\eps}^1 |u(t)|\,dt.
	\end{equation}
	Since $|q_n-(-q_n)|<2$, $C\ge 16/|q_0-q'|\ge 8/\rho_\eps(q_0,q')$, and by \eqref{eq:rho2}, it holds that
	\[
		\rho_1(q_0,q_n)\le \int_0^1 |u(t)|\,dt = \rho_\eps(q_0,-q_n) + \frac 1 C {|q_n-(-q_n)|} \le \frac 3 4 \rho_1(q_0,q').
	\]
	Taking the $\liminf$ as $n\rightarrow \infty$ shows that $\rho_1(q_0,\cdot)$ is not l.s.c. at $q'$.
\end{example}

\subsection{Estimates on reachable sets}\label{subsec:etd}
	
	In this section, we concentrate on a particular class of time-dependent systems. Namely, 
	let $\{f_1,\ldots,f_m\}$ be a bracket-generating family of smooth vector fields, $f_0$ be a smooth vector field, and 
	consider the time-dependent system
	\begin{equation}\label{cs:td2}
		\dot q=\sum_{i=1}^m u_i\, \td t i,\qquad \td t i = (e^{-tf_0})_*f_i(q),\quad q\in M,\quad u=(u_1,\ldots,u_m)\in\real^m.
	\end{equation}
	Here, $(e^{-tf_0})_*$ is the push-forward operator associated with the flow of $f_0$.  

	As we will see in the next section, this class of systems  arises naturally when dealing with control systems that are affine with respect to the control. 
	Observe, in particular, that from the bracket-generating property of $\vett$ it follows immediately that the time-dependent family $\{(e^{-t\drift})_*f_1,\ldots, (e^{-t\drift})_*f_m\}$ satisfies the strong H\"ormander condition, as per Definition~\ref{def:hor}.
	
	Before proceeding with the estimates of the reachable sets, we need to define a suitable approximation of system \eqref{cs:td2}.
	Namely,	fix a system of privileged coordinates (in the sub-Riemannian sense) at $q$ for $\vett$. Assume that $\drift(q)\neq0$, and let $s\in\{1,\ldots,r\}$ be such that $\ord_q(\drift)=-s$. In this case, there exist,  in coordinates, an homogeneous vector field $\drift^{-s}$, of weighted degree $-s$, and a vector field $\drift^{>-s}$, of weighted degree $\ge -s+1$, such that
	\begin{equation}\label{eq:f0}
		z_*\drift=  \drift^{-s}+\drift^{>-s}.
	\end{equation}
	In particular, it holds that $\drift^{-s}\not\equiv0$ near $z(q)=0$.
	
	\begin{rmk}\label{rmk:order}
		The fact that $\ord_q(\drift)=-s$ is not equivalent, in general, to   $\drift\in\distr^s$ near $q$. 
		In particular, if the growth vector is non-constant around $q$, from $\ord_q(\drift)\ge-s$ it does not follow that $\drift\in\distr^{s}$. For example, consider the sub-Riemannian control system on $\real^2$ with (privileged) coordinates $(x,y)$, defined by the vector fields $\partial_x$ and $x\partial_y$,  called the Grushin plane. Outside $\{x=0\}$, the non-holonomic degree of these vector fields is 1, while, on $\{x=0\}$, we need one bracket to generate the $y$ direction, and thus it is $2$. 
		Hence, if $\bar y\neq 0$ the vector field $y\partial_y$ is never in $\distr$ near $(0,\bar y)$, but $\ord_{(0,\bar y)}(y\partial_y)=\ord_{(0,\bar y)}(y)+\ord_{(0,\bar y)}(\partial_y)=0$.

		However, due to the properties \eqref{eq:ord} and the fact that $\distr^s$ is a module, the converse is always true. Namely, if $\drift\in\distr^s$, then $\ord_q(\drift)\ge-s$.
		
	\end{rmk}

        For any smooth vector field $f$, let $\adj 1 \drift f=[\drift,f]$ and  $\adj \ell \drift f = [\drift,\adj {\ell-1} \drift f]$, for any $\ell\in\nat$.
	We recall (see for example \cite{Hermes1991}) that the following Taylor expansion holds
	\begin{equation}\label{eq:push}
		(e^{-tf_0})_*f \sim \sum_{\ell=0}^\infty \frac{t^\ell}{\ell!}\, \adj \ell {\drift} f.
	\end{equation}
	Since $\ord_q(f_j)\ge -1$, by \eqref{eq:ord} we have that $\ord_q(\adj \ell {\drift} f_j)\ge-\ell s-1$. Then, using the decomposition \eqref{eq:f0}, for any $\ell\ge0$, there exists, in coordinates, an homogeneous vector field $\homvf \ell j$ of weighted degree $-\ell s$, and a remainder $r^\ell$ of order $\ge-\ell s-1$, such that
	\begin{equation}\label{eq:fhatell}
		z_* \big[ \adj \ell {\drift} {f_j} \big]=\homvf \ell j + r^\ell.
	\end{equation}
	
	\begin{defn}
  	The \em \app \em at $q$ of $\td t j$, $1\le j \le m$, associated with the privileged coordinates $z$, is the vector field with coordinate representation
	\begin{equation}\label{eq:alpha}
		\ntd t j = \sum_{\ell=0}^\varrho \frac{t^\ell}{\ell!}\,\homvf \ell j,
	\end{equation}
	where $\varrho=\lfloor \nicefrac{r-1}{s} \rfloor$ and $r$ is the non-holonomic degree of $\{f_1,\ldots,f_m\}$ at $q$.
	The approximated time-dependent control system is then defined as
	\begin{equation}
	\tag{ATD}\label{cs:ntd2}
	\dot q = \sum_{j=1}^m u_j(t) \ntd t j(q).\\
	\end{equation}
	If a system, in some system of privileged coordinates, coincides with its \app, we will say that it is \appsys. 
	\end{defn}
	
	The \app encodes the idea that the time $t$ is of weight $s=-\ord_q(\drift)$. 
	This is a consequence of the fact that, due to the expansion \eqref{eq:push}, $t$ allows to build brackets of $f_0$ with the $f_j$s.
	In this sense, the \app is a generalization of the nilpotent approximation.
	
	We are now ready to state the main theorem of this section.
	\begin{thm}\label{thm:ballbox}
	Let $\vett$ satisfy the H\"ormander condition, let $z=(z_1,\ldots,z_n)$ be a system of privileged coordinates at $q\in M$ for $\vett$. 
	Then there exist $C,T,\eps_0>0$ such that, for any $\eps<\eps_0$ and any $q'\in\btdt {q} \eps T$, setting $s=-\ord_q(\drift)$ it holds
	\begin{gather}\label{eq:bb}
          |z_i(q')|\le 
          C\bigg(  \eps^{w_i}+\eps T^{\frac{w_i}{s}} \bigg)\quad \text{if }w_i\le s, \\
          \label{eq:bb2}
          |z_i(q')|\le C\eps\bigg(  \eps+T^{\frac{1}{s}} \bigg)^{w_i-1}\quad \text{if }w_i> s.
	\end{gather}

	Moreover, if the system is \appsys, then it holds the stronger estimate 
	\begin{equation}\label{eq:bbn}
		|z_i(q')|\le 
			C  \eps^{w_i} \quad \text{if }w_i\le s. 
	\end{equation}
	\end{thm}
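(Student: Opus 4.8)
The plan is to prove the estimates first for the \emph{series homogeneous} system \eqref{cs:ntd2} and then to pass to \eqref{cs:td2} by comparing its trajectories with those of its \app. Throughout I would take $q=0$, work in the privileged coordinates $z$, write $s=-\ord_q(\drift)$, and use the dilations $\delta_\lambda(z)=(\lambda^{w_1}z_1,\ldots,\lambda^{w_n}z_n)$. The starting observation is that, once the time variable $t$ is given the weight $s$, each $\ntd t j$ in \eqref{eq:alpha} is homogeneous of weighted degree $-1$: the term $\frac{t^\ell}{\ell!}\homvf \ell j$ has weighted degree $-1$, so $\frac{t^\ell}{\ell!}(\homvf \ell j)_i$ is a sum of monomials $t^\ell z^\alpha$ with $w(\alpha)=w_i-1-\ell s$, and this forces $(\homvf \ell j)_i\equiv 0$ as soon as $\ell s>w_i-1$. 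Two consequences would be recorded: (i) if $z(\cdot)$ solves \eqref{cs:ntd2} with control $u$ on $[0,T']$, then $\tilde z_i(\tau)=\lambda^{-w_i}z_i(\lambda^s\tau)$ solves \eqref{cs:ntd2} with control $\tilde u_j(\tau)=\lambda^{s-1}u_j(\lambda^s\tau)$ on $[0,T'/\lambda^s]$, and $\|\tilde u\|_{L^1}=\lambda^{-1}\|u\|_{L^1}$, so the estimates are scale-covariant and may in principle be reduced to $\eps=1$; (ii) for the coordinates with $w_i\le s$ only $\ell=0$ survives, so there $(\ntd t j)_i=h_{ij}$ (since $\homvf 0 j=\widehat f_j$), and the equations for $\{z_i:w_i\le s\}$ in \eqref{cs:ntd2} form an autonomous closed subsystem, identical to the one formed by the same coordinates in the nilpotent sub-Riemannian system \eqref{cs:nsr}.

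\emph{Step 1 (homogeneous system).} Consequence (ii) already yields \eqref{eq:bbn}: if $q'\in\btdt q \eps T$ with control $u$, $\|u\|_{L^1}\le\eps$, then by (ii) the coordinates $z_i(q')$ with $w_i\le s$ coincide with the corresponding coordinates of the endpoint of the \eqref{cs:nsr}-trajectory driven by the same control, which by Theorem~\ref{thm:srballbox} lies in $\bbox{C\eps}$; hence $|z_i(q')|\le C\eps^{w_i}$, with $C$ independent of $T$. For $w_i>s$ I would proceed by induction on $w_i$: the $z_i$-equation reads $\dot z_i=\sum_j\sum_{\ell=0}^{\lfloor(w_i-1)/s\rfloor}\frac{\tau^\ell}{\ell!}u_j(\tau)\,P_{ij\ell}(z)$ with $P_{ij\ell}$ homogeneous of weighted degree $w_i-1-\ell s$ in coordinates of weight $<w_i$; inserting the inductive bounds $|z_k(\tau)|\le C\eps(\eps+\tau^{1/s})^{w_k-1}$ (which for $w_k\le s$ follow from $|z_k(\tau)|\le C\eps^{w_k}$), using $\tau^\ell\le(\eps+\tau^{1/s})^{s\ell}$, and integrating each term against $u_j$ with $\|u_j\|_{L^1}\le\|u\|_{L^1}\le\eps$, one checks that every term is bounded by $C\eps(\eps+\tau^{1/s})^{w_i-1}$; hence $|z_i(\tau)|\le C\eps(\eps+\tau^{1/s})^{w_i-1}$ for all $\tau\le T'$, and at $\tau=T'\le T$ this is \eqref{eq:bb2}.

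\emph{Step 2 (approximation, and conclusion).} To pass to the general system I would Taylor-expand $(e^{-t\drift})_*f_j$ in $t$ to order $\varrho$: by \eqref{eq:push} and \eqref{eq:fhatell}, and using the genuine smoothness of $(t,z)\mapsto(e^{-t\drift})_*f_j$, one gets $z_*\td t j=\ntd t j+R^t_j$, where $R^t_j$ is a finite sum of $\frac{t^\ell}{\ell!}r^\ell$ with $r^\ell$ of order $\ge-\ell s$, plus $t^{\varrho+1}$ times a smooth vector field; in particular the $i$-th component of $R^t_j$ is $O(t^\ell)$ times a function of order $\ge\max(0,w_i-\ell s)$. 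Given a \eqref{cs:td2}-trajectory $\gamma$ with control $u$, $\|u\|_{L^1}\le\eps$, on $[0,T']$ with $T'\le T$, and the \eqref{cs:ntd2}-trajectory $\hat\gamma$ with the same control and $\hat\gamma(0)=\gamma(0)=0$, I would subtract the two differential equations and run a Gronwall estimate, inductively on $w_i$, using the Step~1 bounds to control the weighted size of both trajectories by $\eps+T^{1/s}$. This should produce $|z_i(\gamma(T'))-z_i(\hat\gamma(T'))|\le C\eps(\eps+T^{1/s})^{w_i}$ when $w_i\le s$, and an error of the same shape $C\eps(\eps+T^{1/s})^{w_i-1}$ when $w_i>s$. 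Shrinking $T$ and then choosing $\eps_0\le T^{1/s}$, the first bound is $\le C(\eps^{w_i}+\eps T^{w_i/s})$; adding these errors to the estimates of Step~1 gives \eqref{eq:bb} and \eqref{eq:bb2}, while \eqref{eq:bbn} is exactly the Step~1 conclusion when the system already equals its approximation.

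The main obstacle is Step~2: one has to follow two independent small parameters simultaneously — powers of $\eps$ coming from $\|u\|_{L^1}\le\eps$ and powers of $T$ coming from integrating the monomials $t^\ell$ generated by \eqref{eq:push} — and check that at every weight $w_i$ they recombine into precisely $\eps^{w_i}+\eps T^{w_i/s}$ (for $w_i\le s$), respectively $\eps(\eps+T^{1/s})^{w_i-1}$ (for $w_i>s$). Making the induction on the weight close requires careful bookkeeping of the non-holonomic orders of the remainders $r^\ell$ and of how they feed back through the Gronwall loop together with the lower-weight differences $z_k-\hat z_k$. A minor but real technical point, also to be handled with care, is that \eqref{eq:push} is only an asymptotic expansion, so its truncation must be justified by Taylor's theorem with remainder applied to the smooth map $(t,z)\mapsto(e^{-t\drift})_*f_j$, rather than by manipulating the series formally.
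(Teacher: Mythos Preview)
Your proposal is correct and follows the same two-step strategy as the paper: first treat the series-homogeneous system \eqref{cs:ntd2}, then pass to \eqref{cs:td2} via an approximation estimate. The approximation you sketch in Step~2 is precisely the content of Proposition~\ref{prop:ngenappr} in the paper, and your outline of its proof (Taylor-expand $(e^{-t\drift})_*f_j$, subtract, induct on the weight) matches the paper's argument there.

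Where you genuinely differ is in Step~1. For $w_i\le s$ you observe directly that the $i$-th component of $\ntd t j$ retains only the $\ell=0$ term (since $\homvf \ell j$ has weighted degree $-\ell s-1$, its $i$-th component is a homogeneous polynomial of degree $w_i-1-\ell s<0$ for $\ell\ge 1$), so the coordinates of weight $\le s$ evolve as a closed subsystem identical to the corresponding block of \eqref{cs:nsr}; then \eqref{eq:bbn} is immediate from Theorem~\ref{thm:srballbox}. The paper instead introduces the intermediate families $\varphi^\alpha_j=\sum_{\ell=0}^\alpha\frac{t^\ell}{\ell!}\homvf \ell j$ and telescopes $x^{(\varrho)}-x^{(0)}=\sum_\alpha(x^{(\alpha)}-x^{(\alpha-1)})$, proving each summand vanishes in coordinate $i$ once $w_i\le\alpha s$. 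Your route reads off the same fact at once and is cleaner. For $w_i>s$ you run a direct induction on the weight while the paper continues the telescoping; the bookkeeping is comparable and both yield $|z_i|\le C\eps(\eps+T^{1/s})^{w_i-1}$.

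One small correction: the approximation error in Step~2 comes out as $C\eps(\eps+T^{1/s})^{w_i}$ for \emph{all} $i$ (this is exactly \eqref{eq:nil}), not $(\eps+T^{1/s})^{w_i-1}$ for $w_i>s$. Since $(\eps+T^{1/s})^{w_i}\le(\eps+T^{1/s})^{w_i-1}$ for small $\eps,T$, this is harmless for the conclusion. Your remark about choosing $\eps_0\le T^{1/s}$ is also unnecessary: the elementary inequality $(\eps+T^{1/s})^{w_i}\le 2^{w_i}(\eps^{w_i}+T^{w_i/s})$ already turns $\eps(\eps+T^{1/s})^{w_i}$ into the right-hand side of \eqref{eq:bb}.
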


	To prove this theorem we need the following proposition, estimating the difference between \eqref{cs:td2} and \eqref{cs:ntd2}.  
	
	\begin{prop}\label{prop:ngenappr}
	Let $\vett$ satisfy the H\"ormander condition,
	and let  $z=(z_1,\ldots,z_n)$ be a system of privileged coordinates at $q\in M$ for $\vett$. 
	For  $T>0$ and $u\in \xLone([0,T];\real^m)$, let $\gamma(\cdot)$ and $\hat \gamma(\cdot)$ be the trajectories associated with $u$ in \eqref{cs:td2} and \eqref{cs:ntd2}, respectively, and such that $\gamma(0)=\hat\gamma(0)=q$. 
	Then there exist $C,\eps_0,T_0>0$, independent of $u$, such that, if $t<T_0$ and $\int_0^t|u|\,ds= \eps<\eps_0$, and setting $s=-\ord_q(\drift)$ it holds 		
	\begin{equation}\label{eq:nil}
		|z_i(\gamma(t))-z_i(\hat \gamma (t))|\le C \eps \big( \eps+t^{ \frac {1} s} \big)^{w_i},\qquad i=1,\ldots,n.
	\end{equation}
\end{prop}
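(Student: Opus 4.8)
The plan is to adapt the proof of Proposition~\ref{prop:nilsr} to the anisotropic ``space $\times$ time'' scaling forced by the drift, in which the time variable is assigned weight $s=-\ord_q(\drift)$. Set $\mu=\eps+t^{1/s}$ and let $\delta_\lambda$ be the dilation attached to the privileged coordinates, $\delta_\lambda(z_1,\dots,z_n)=(\lambda^{w_1}z_1,\dots,\lambda^{w_n}z_n)$. I would rescale space, time and control simultaneously: with $\tau_*:=t/\mu^s$ and $\sigma\in[0,\tau_*]$, put
\[
\tilde\gamma(\sigma)=\delta_{1/\mu}\bigl(z(\gamma(\mu^s\sigma))\bigr),\qquad
\tilde{\hat\gamma}(\sigma)=\delta_{1/\mu}\bigl(z(\hat\gamma(\mu^s\sigma))\bigr),\qquad
v_j(\sigma)=\mu^{s-1}u_j(\mu^s\sigma).
\]
Since $(\eps+t^{1/s})^s\ge t$ one has $\tau_*\le1$, and the substitution $t'=\mu^s\sigma$ gives $\|v\|_{\xLone([0,\tau_*])}=\eps/\mu\le1$. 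The payoff of this choice is that, after rescaling, $\tilde{\hat\gamma}$ solves an \emph{exact}, $\mu$-independent equation while $\tilde\gamma$ solves a small perturbation of it, on a time interval of length $\le1$ driven by a control of $L^1$-norm $\le1$; the weights are fully absorbed by $\delta_{1/\mu}$, so a scalar Gronwall estimate for $|\tilde\gamma-\tilde{\hat\gamma}|$ in Euclidean norm will be enough. (When $\drift$ is absent this is exactly the rescaling behind Proposition~\ref{prop:nilsr}.)

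The first ingredient is a homogeneity identity: \eqref{eq:alpha} is built so that each term $\tfrac{t^\ell}{\ell!}\homvf{\ell}{j}$ becomes homogeneous once $t$ is weighted by $s$; from the order and homogeneity data in \eqref{eq:fhatell} one checks that $\mu^{1-w_i}(\ntd{\mu^s\sigma}{j})_i(\delta_\mu\tilde z)=(\ntd{\sigma}{j})_i(\tilde z)$, hence $\tilde{\hat\gamma}{}'=\sum_j v_j(\sigma)\,\ntd{\sigma}{j}(\tilde{\hat\gamma})$, $\tilde{\hat\gamma}(0)=0$. The second ingredient is an a priori bound. The homogeneous system \eqref{cs:ntd2} is triangular in privileged coordinates --- $(\ntd{\sigma}{j})_i(z)$ is a polynomial involving only the $z_k$ with $w_k<w_i$ --- so, inducting on the weight and using $\|v\|_{\xLone}\le1$, $\tau_*\le1$, the trajectory $\tilde{\hat\gamma}$ stays in a fixed ball $B(0,R)$; equivalently $|z_i(\hat\gamma(\tau))|\lesssim\mu^{w_i}$ for $\tau\le t$, which is precisely the refined box bound on $\hat\gamma$ needed below. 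For $\eps_0,T_0$ small, $\gamma$ and $\hat\gamma$ also remain in a fixed compact neighborhood of $q$ where the coordinates and vector fields are controlled, and a continuation argument (once the defect below is known to be $O(\mu)$) keeps $\tilde\gamma$ in $B(0,R)$ as well.

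The core of the proof is the estimate on the defect $R^\tau_j:=z_*\td{\tau}{j}-\ntd{\tau}{j}$,
\[
\bigl|(R^\tau_j)_i(z)\bigr|\ \lesssim\ \bigl(\|z\|_*+\tau^{1/s}\bigr)^{w_i},\qquad 1\le i\le n,
\]
uniformly for $(\tau,z)$ near $(0,0)$, $\|\cdot\|_*$ a homogeneous pseudonorm. To get it, Taylor-expand $\tau\mapsto z_*\td{\tau}{j}$ to order $\varrho$ with integral remainder: by \eqref{eq:push} the $\ell$-th coefficient is $z_*[\adj{\ell}{\drift}f_j]$, which by the subadditivity \eqref{eq:ord} has order $\ge-\ell s-1$, so by \eqref{eq:fhatell} its principal part is $\homvf{\ell}{j}$ and the leftover $r^\ell_j$ has order $\ge-\ell s$. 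For $\ell\le\varrho$, the $i$-th component of $\tfrac{\tau^\ell}{\ell!}r^\ell_j(z)$ is $\lesssim(\tau^{1/s})^{\ell s}\|z\|_*^{\max(0,w_i-\ell s)}\lesssim(\|z\|_*+\tau^{1/s})^{w_i}$ by a binomial inequality, and the choice $\varrho=\lfloor(r-1)/s\rfloor$ forces $(\varrho+1)s\ge r\ge w_i$, so the Taylor remainder, of size $O(\tau^{\varrho+1})\le(\tau^{1/s})^{w_i}$, is absorbed too. Rescaling this bound ($\tau=\mu^s\sigma$, $z=\delta_\mu\tilde z$), $\tilde\gamma$ satisfies $\tilde\gamma{}'=\sum_j v_j(\sigma)\bigl[\ntd{\sigma}{j}(\tilde\gamma)+E_j(\sigma,\tilde\gamma)\bigr]$ with $(E_j)_i(\sigma,\tilde z)=\mu^{1-w_i}(R^{\mu^s\sigma}_j)_i(\delta_\mu\tilde z)$, so $|(E_j)_i(\sigma,\tilde z)|\lesssim\mu(\|\tilde z\|_*+\sigma^{1/s})^{w_i}\lesssim\mu$ on $B(0,R)$. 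As $\ntd{\sigma}{j}$ is Lipschitz on $B(0,R)$ uniformly in $\sigma\le1$, Gronwall gives $|\tilde\gamma(\sigma)-\tilde{\hat\gamma}(\sigma)|\lesssim\mu\,\|v\|_{\xLone}\le\mu\cdot(\eps/\mu)=\eps$ for $\sigma\le\tau_*$; taking $\sigma=\tau_*$ (so $\mu^s\tau_*=t$) and undoing the dilation,
\[
|z_i(\gamma(t))-z_i(\hat\gamma(t))|=\mu^{w_i}\bigl|\tilde\gamma_i(\tau_*)-\tilde{\hat\gamma}_i(\tau_*)\bigr|\ \lesssim\ \eps\,\mu^{w_i},
\]
which is \eqref{eq:nil}.

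The one genuinely delicate step is the defect estimate: one must check that $\ntd{\tau}{j}$ really is the principal part of $z_*\td{\tau}{j}$ in the grading with $t$ of weight $s$, so that the remainder gains a full unit of weight in each component, and that truncating at $\ell=\varrho$ loses nothing --- which is exactly what the definition of $\varrho$ is designed for. The a priori box estimate on $\hat\gamma$ also needs some bookkeeping, but follows routinely from the triangular form of \eqref{cs:ntd2}; everything else is the Gronwall argument, made transparent by the rescaling.
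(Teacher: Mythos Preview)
Your argument is correct and takes a genuinely different route from the paper's. The paper works directly in the original coordinates: it first shows $\|x(t)\|,\|y(t)\|\le C\eps$ in the homogeneous pseudonorm, then writes $\dot x_i-\dot y_i$ explicitly using the polynomial structure of the $h_{ji}^\ell$ and $r_{ji}^\ell$, and proceeds by induction on the weight $w_i$, finishing with a case split $t\le\eps^s$ versus $t>\eps^s$. Your approach replaces this induction--plus--dichotomy by a single anisotropic rescaling with parameter $\mu=\eps+t^{1/s}$, exploiting the exact homogeneity of \eqref{cs:ntd2} under $(\delta_\mu,\tau\mapsto\mu^s\tau)$ to turn the problem into a Gronwall estimate for a bounded perturbation on a unit time interval with unit $\xLone$ control. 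What your route buys is conceptual clarity: the form $\eps(\eps+t^{1/s})^{w_i}$ of the bound is forced by the scaling, and the role of $\varrho=\lfloor(r-1)/s\rfloor$ (ensuring $(\varrho+1)s\ge r\ge w_i$ so the Taylor remainder is harmless) becomes transparent. The paper's route is more elementary and self-contained, requiring no rescaling machinery or bootstrap to confine $\tilde\gamma$, at the cost of heavier bookkeeping in the inductive step. One minor point: your Gronwall actually yields $|\tilde\gamma-\tilde{\hat\gamma}|\lesssim\mu\,\psi(\sigma)e^{L\psi(\sigma)}$ with $\psi(\sigma)=\int_0^\sigma|v|\le\eps/\mu$, so the final bound $\lesssim\eps$ is exactly as you claim, but it is worth making the $e^{L\psi}$ factor explicit since it carries the dependence on the Lipschitz constant of $\ntd{\sigma}{j}$ on $B(0,R)$.
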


\begin{rmk}
	This proposition generalizes Proposition~\ref{prop:nilsr}. In fact, in the sub-Riemannian case, since $f_0\equiv 0$, if, for $t>0$, the curve $\gamma$ is associated to  $u\in \xLone([0,t],\real^m)$, it is associated also to $u_\tau(\cdot)=\frac \tau t u(\frac \tau t \cdot)$, for any $\tau>0$. 
	Thus, since $\int_0^\tau |u_\tau|\,ds=\int_0^t |u|\,ds=\eps$, \eqref{eq:nil} reduces to
	\[
		|z_i(\gamma(t))-z_i(\hat\gamma(t))|\le \lim_{\tau\downarrow 0}C(\eps^{w_i+1}+\tau\eps^{r}) = C \eps^{w_i+1}.
	\]
	Finally, assuming that $u$ satisfies  the hypotheses of Proposition~\ref{prop:nilsr}, i.e., that  $|u|=1$, we get  $t=\eps$. 
\end{rmk}

\begin{proof}
	Let $z(\gamma(\cdot))=x(\cdot)$, $z(\hat \gamma(\cdot))=y(\cdot)$,  and $\|z\|=\sum_{\ell=1}^n |z_\ell|^{1/w_i}$. We mimic the proof of Proposition~7.29 in \cite{Bellaiche1996}. 
	The first step is to prove that there exists a constant $C>0$ such that $\|x(t)\|,\,\|y(t)\|\le C \eps$ for  $t$ and $\eps=\int_0^t |u|\,ds$ small enough. We prove this for $\|x(t)\|$, the same argument works also for $\|y(t)\|$.
	
	In $z$ coordinates, the equation of the control system \eqref{cs:td2} is,	\begin{equation*}
		\dot x_i(t) = \sum_{j=1}^m u_j(t) (z_i)_*\,\td t j (\gamma(t)),\qquad i=1,\ldots, n.
	\end{equation*}
	Due to the fact that $z_*\td t j = z_* f_j + \mathcal O(t)$, uniformly in a neighborhood of $q$
	that $\ord_q(z_i)=w_i$ and that $\ord_q(f_j)\ge-1$, we have that there exist $T_0$ and $C>0$ such that $|(z_i)_* \td t j (q)|\le \frac C 2 |(z_i)_* f_j(q)|\le C \| x(t)\|^{w_i-1}$, for any $t<T_0$. Thus we get
	\begin{equation}\label{eq:rise}
		|\dot x_i(t)| \le C \sum_{j=1}^m |u_j(t)| \|x_i(t)\|^{w_j-1}.
	\end{equation}
	As in the proof for the sub-Riemannian case, choosing $N$ sufficiently large, so that all $N/w_i$ are even integers, and setting $|||z|||=(\sum_{\ell=1}^n |z_\ell|^{N/w_i})^{\frac 1 N}$ we get a norm equivalent to $\|z\|$. Deriving with respect to time and using \eqref{eq:rise} we get $\frac{d}{dt}||| x(t) |||\le C \sum_{j=1}^n|u_j(t)| $. Finally, by integration, equivalence of the norms, and the fact that $x(0)=z(q)=0$, we conclude that $\|x(t)\|\le C\eps$.

	Now we move to proving \eqref{eq:nil}.
	By construction of \eqref{cs:ntd2} and the Taylor expansion of $\td t j$, for any $\ell\le \varrho=\lfloor \nicefrac{r-1}{s}\rfloor$, there exist homogeneous polynomials $h_{ji}^\ell$ of order $w_i-\ell\sM-1$ and remainders $r_{ji}^\ell$ of order larger than or equal to $w_i-\ell\sM$, such that we can write
	\begin{gather*}
		(z_i)_*\td t j=\sum_{\ell=0}^{\varrho} \frac {t^\ell}{\ell!} \big(h_{ji}^\ell + r_{ji}^\ell \big)+ \bigo(t^{\varrho+1}),\\
		(z_i)_*\ntd t j=\sum_{\ell=0}^{\varrho} \frac {t^\ell}{\ell!}  h_{ji}^\ell .
	\end{gather*}
	Here, the $\bigo$ is intended as $t\downarrow 0$ and is uniform in a compact neighborhood of the origin.
	Then, 
	\[
	\begin{split}
		\dot x_i(t)- &\dot y_i(t) = \sum_{j=1}^m u_j(t) \bigg(  \sum_{\ell=0}^{\varrho} \frac {t^\ell}{\ell!} \big(h_{ji}^\ell(x)- h_{ji}^\ell(y) + r_{ji}^\ell(x) \big)+  \bigo({t^{\varrho+1}})\bigg) \\
			 &\quad= \sum_{j=1}^m u_j(t) \bigg(  \sum_{\ell=0}^{\varrho} \frac {t^\ell}{\ell!}  \bigg(\sum_{w_k<w_i-\ell\sM} \big(x_k(t)-y_k(t) \big)Q_{jik}^\ell(x,y)+ r_{ji}^\ell(x) \bigg)+\bigo({t^{\varrho+1}}) \bigg),
	\end{split}
	\]
	where $Q_{jik}^	\ell$ are homogeneous polynomial in $x$ and $y$, of order $w_i-w_k-\ell\sM-1$. We observe that, if $w_i-w_k-\ell\sM-1<0$, then $Q^\ell_{jik}\equiv0$. Thus, for sufficiently small $\|x\|$ and $\|y\|$, we have 
	\begin{gather*}
		|Q_{jik}^\ell(x,y)|\le C\big( \|x\|^{(w_i-w_k-\ell\sM-1)^+} + \|y\|^{(w_i-w_k-\ell\sM-1)^+}   \big), \quad
		|r_{ji}^\ell(x)|\le C\|x\|^{(w_i-\ell\sM)^+}.
	\end{gather*}
	Here we let $(\xi)^+=\max\{\xi,0\}$, for any $\xi\in\real$.
	Using the inequalities of the first step, taking $t<T$ sufficiently small, and enlarging the constant $C$, we get
	\[
	\begin{split}
		|\dot x_i(t)-&\dot y_i(t)| \\
			&\le C |u(t)| \bigg(  \sum_{\ell=0}^{\varrho} \frac {t^\ell}{\ell!} \bigg(\sum_{w_k<w_i-\ell\sM} \big|x_k(t)-y_k(t) \big| \eps^{w_i-w_k-\ell\sM-1}+ \eps^{(w_i-\ell\sM)^+} \bigg)+  {t^{\varrho+1}} \bigg)\\
			&\le C|u(t)| \bigg(\sum_{\ell=0}^{\varrho} t^\ell\bigg( \sum_{w_h<w_i} \big|x_h(t)-y_h(t) \big| \eps^{w_i-w_h-1}+ \eps^{(w_i-\ell\sM)^+}\bigg) +{t^{\varrho+1}}  \bigg).
	\end{split}
	\]
	In the last inequality we applied the change of variable $w_k\mapsto w_h-\ell\sM$ in each of the sums.
			
	We can integrate the system by induction, since it is in triangular form.  For $w_i=1$, since $(w_i-\ell s)^+=0$ for any $\ell\ge 1$, the inequality reduces to 
	\[
	|\dot x_i(t)-\dot y_i(t)|\le C |u(t)|\left(\sum_{\ell=0}^{\varrho} t^\ell\eps^{(w_i-\ell\sM)^+}+t^{\varrho+1}\right)\le C |u(t)|(\eps^{w_i} + t).
	\]
	Here we enlarged the constant $C$. Thus, integrating the previous inequality, we get 
	$
		|x_i(t)-y_i(t)| \le C \eps (\eps^{w_i} + t)\le C \eps (\eps + t^{\frac 1 s})^{w_i}.
	$
	
	Let, then, $w_i>1$ and assume that $ |x_h(t)-y_h(t)|\le C  \eps (\eps+t^{\frac{1}s} )^{w_h}$ for any $w_h<w_i$. 
	To complete the proof it suffices to show that $|\dot x_i (t)-\dot y_i(t)|\le C|u(t)|(\eps+t^{\frac{1} s})^{w_i}$, since \eqref{eq:nil} will follow, as above, by integration.
	Thus, we have, enlarging again the constant $C$ and taking $t$ sufficiently small,
	\begin{equation}\label{eq:ciao}
		\begin{split}
			|\dot x_i & (t)-\dot y_i(t)| \\
				&\le C |u(t)| \bigg( \sum_{\ell=0}^{\varrho} t^\ell\left(\sum_{w_h<w_i} \left(\eps+t^{\frac{1}s} \right)^{w_h} \eps^{w_i-w_h} + \eps^{(w_i-\ell\sM)^+} \right) + t^{\varrho+1} \bigg)\\
				&\le C |u(t)| \bigg( \sum_{\ell=0}^{\varrho} t^\ell\left(\sum_{w_h<w_i} t^{\frac{w_h}s} \eps^{w_i-w_h} + \eps^{(w_i-\ell\sM)^+} \right) + t^{\varrho+1} \bigg).
		\end{split}
	\end{equation}
	 If $t\le \eps^s$, from \eqref{eq:ciao} it is clear that $|\dot x_i (t)-\dot y_i(t)| \le C|u(t)|\eps^{w_i}$. Here we used the fact that $\varrho+1\ge w_i/s$. On the other hand, if $\eps< t^{\nicefrac{1}{s}}$, it holds
	\[
	|\dot x_i (t)-\dot y_i(t)|\le C|u(t)| \left(\sum_{\ell=0}^{\varrho} \left( \sum_{w_h<w_i} t^{\frac{w_h}s+\ell+\frac{w_i-w_h}{s}} +t^{\ell+\frac{w_i-\ell s} s}\right) +t^{\varrho+1} \right)\le C |u(t)| t^{\frac{w_i} s}.
	\]
	Putting together these two estimates, we get that $|\dot x_i (t)-\dot y_i(t)|\le C|u(t)|(\eps^{w_i}+t^{\frac{w_i} s})\le C|u(t)|(\eps+t^{\frac{1} s})^{w_i}$, completing the proof of the proposition.
\end{proof}

\begin{proof}[Proof of Theorem~\ref{thm:ballbox}]
	We start by claiming that \eqref{eq:bbn} implies \eqref{eq:bb}. In fact, if $\gamma:q\rightsquigarrow q'$ is the trajectory associated in \eqref{cs:td2} to a control $u\in \lcont$, and $\hat\gamma$ is the trajectory associated with the same control in the \app \eqref{cs:ntd2}, with $\hat\gamma(0)=q$, it holds
	\[
		|z_i(q')|\le|z_i(\hat\gamma(T))|+|z_i(\hat\gamma(T))-z_i(\gamma(T))|.
	\]
	Thus, by Proposition~\ref{prop:ngenappr}, the claim is proved. 
	
	Hence, from now on we assume our system to be in the form \eqref{cs:ntd2}.
	Let us define, for $1\le j \le n$ and $0\le\alpha\le r$,  the vector fields $\varphi_j^\alpha$ as
	\[
		\varphi_j^\alpha=\sum_{\ell=0}^{\alpha} \frac {t^\ell}{\ell!} \homvf \ell j,
	\]
	where $\homvf \ell j$ are defined in \eqref{eq:fhatell}. 
	We do not explicitly denote the dependence on time, to lighten the notation. Observe that, if $\alpha=\varrho$, then, by \eqref{eq:alpha}, $\varphi_j^\alpha=\ntd t j$.
	
	We claim that, letting $x^{(\alpha)}(\cdot)$ be the trajectory associated with a control $u\in\lcont$ in system \eqref{cs:td} with $\{\varphi_1^\alpha,\ldots,\varphi_m^\alpha\}$ as vector fields, then, for some constant $C>0$ and any $i\in\{1,\ldots,n\}$ and $\alpha\ge 1$, it holds
	\begin{equation}\label{eq:al}
		|x^{(\alpha)}_i(T)-x^{(\alpha-1)}_i(T)| \le
		\begin{cases}
			0 &\quad \text{if } w_i\le\alpha s,\\
		 	C \eps(\eps+ T^{\frac{1}{s}})^{w_i-1}&\quad \text{if } w_i>\alpha s.
		 \end{cases}
	\end{equation} 	
	In fact, due to the homogeneity of the $\homvf \ell j$, proceeding as in the proof of Proposition~\ref{prop:ngenappr}, we get that for $w_i\le\alpha s$ it holds
	\begin{equation*}
		|\dot x^{(\alpha)}_i(t)-\dot x^{(\alpha-1)}_i(t)| \le C |u(t)| \sum_{\ell=0}^{\alpha-1} t^\ell \sum_{w_h<w_i} | x^{(\alpha)}_h(t)- x^{(\alpha-1)}_h(t)| \eps^{w_i-w_h-1}.
	\end{equation*}
	By induction on $1\le w_i\le \alpha s$, this proves the first part of the claim. On the other hand, if $w_i > \alpha s$, it holds
	\begin{equation*}
		|\dot x^{(\alpha)}_i(t)-\dot x^{(\alpha-1)}_i(t)|
		\le C |u(t)|\bigg( \sum_{\ell=0}^{\alpha-1} t^\ell \sum_{w_h<w_i} | x^{(\alpha)}_h(t)- x^{(\alpha-1)}_h(t)| \eps^{w_i-w_h-1} + t^{\alpha} \eps^{w_i-\alpha\sM-1}\bigg).
	\end{equation*}
	Then, again by induction over $w_i$, we get that $|x^{(\alpha)}_i(T)-x^{(\alpha-1)}_i(T)|\le C T^{\alpha}\eps^{w_i-\alpha s}$. Finally, the claim follows considering the cases $T\le \eps^s$ and $T>\eps^s$.
		
	Due to the fact that $\varphi_{j}^0=\widehat f_j$, by Theorem~\ref{thm:srballbox} it holds $|x_i^{(0)}(T)|\le C\eps^{w_i}$. 
	Thus, applying \eqref{eq:al} and enlarging the constant $C$, we get 
	\[
		|z_i(q')|=|x_i^{(r)}(T)|  \le \sum_{\ell=1}^{r} \left|x^{(\ell)}_i(T)-x^{(\ell-1)}_i(T)\right| +\left|x_i^{(0)}(T)\right| 
		\le 
		\begin{cases}
			C\eps^{w_i} & \text{if } w_i\le s,\\
			C   \eps(\eps+ T^{\frac{1}{s}})^{w_i-1}& \text{if } w_i> s.\\
		\end{cases}
	\]
	This proves \eqref{eq:bb2} and  \eqref{eq:bbn}, completing the proof of the theorem.
	\end{proof}	
	
	We end this section by showing that the estimate \eqref{eq:bb2} is sharp, at least in some directions.  
	Indeed, for a system which is \appsys $\text{ }$ at $q$ in some privileged coordinates $z$, and satisfies the hypotheses of Theorem~\ref{thm:ballbox}, it holds that $z_*(\adj k {\drift} f_j)$ is an homogeneous vector field of weighted degree $-sk-1$. Thus, since $\eps t^k\le \eps(\eps+t^{\frac{1}{s}})^{sk}$, the following proposition shows that \eqref{eq:bb2} is sharp in this direction. The proof is an adaption of an argument from \cite{Coron2009}. 
	
	\begin{prop}\label{prop:inner}
	Let $\vett$ satisfy the H\"ormander condition. Let, moreover
		$q\in M$, $i\in\{1,\ldots,m\}$ and $k\ge 0$. Then, for any coordinate system $y$ at $q$, there exist $T,\eps_0>0$ such that, for any $\eps<\eps_0$ and $t<T$ there exists
		 a \eqref{cs:td}-admissible curve $\gamma:[0,t]\to M$, with $\cost(\gamma)\le\eps$, and such that
		 \[
		 	y(\gamma(t))= \eps t^{k} dy\big(\adj k {\drift} f_j(q)\big)+\bigo(\eps t^{k+1}) \quad \text{ as } \eps t\rightarrow 0.
		 \]
	\end{prop}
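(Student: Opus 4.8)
The plan is to generate the direction $\adj k\drift f_j$ using the $j$-th control alone, exploiting the fact that in a time‑dependent system of the form \eqref{cs:td2} the time variable plays the role of brackets with the drift. Indeed, by \eqref{eq:push} one has $\td\tau j\sim\sum_{\ell\ge0}\frac{\tau^\ell}{\ell!}\,\adj\ell\drift f_j$, so $\adj k\drift f_j$ is precisely the $k$-th Taylor coefficient of $\tau\mapsto\td\tau j$ at $\tau=0$. To isolate it, I would drive the system with a control proportional to a fixed ``profile'' whose moments of orders $0,\dots,k-1$ vanish, thereby killing the contributions of $f_j,\adj1\drift f_j,\dots,\adj{k-1}\drift f_j$ and leaving only $\adj k\drift f_j$ at the leading order.

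Concretely, write $\gamma$ in the coordinates $y$, so that \eqref{cs:td2} with $u_i\equiv0$ for $i\ne j$ becomes $\dot x(\tau)=u_j(\tau)\,g^\tau(x(\tau))$, $x(0)=y(q)$, where $g^\tau=dy\circ\td\tau j\circ y^{-1}$ is smooth in $(\tau,x)$. I take
\[
u_j(\tau)=\frac\eps t\,\psi\!\left(\frac\tau t\right),\qquad u_i\equiv0\ (i\ne j),
\]
with $\psi\colon[0,1]\to\real$ a fixed function satisfying $\int_0^1|\psi|\,d\sigma\le1$; then $\cost(\gamma)\le\int_0^t|u_j|\,d\tau=\eps\int_0^1|\psi|\le\eps$, as required. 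By the variation‑of‑constants formula (cf.\ \cite{Sachkov2004}), freezing the trajectory at its initial point and noting that $g^\tau(x(0))=dy(\td\tau j(q))$,
\[
y(\gamma(t))=y(q)+\int_0^t u_j(\tau)\,dy\big(\td\tau j(q)\big)\,d\tau+R,\qquad R=\int_0^t u_j(\tau)\big(g^\tau(x(\tau))-g^\tau(x(0))\big)\,d\tau .
\]
Since $\int_0^t|u_j|\le\eps$, a direct Gronwall estimate (as in the first step of the proof of Proposition~\ref{prop:ngenappr}) gives $|x(\tau)-x(0)|\le C\eps$ on $[0,t]$, hence $|R|\le C\eps^2$; expanding $R$ once more and using that $\int_0^t u_j\,d\tau=0$ (guaranteed by the choice of $\psi$ below) one gains an extra power of $t$, so $R$ is negligible against the leading term as $\eps t\to0$.

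It remains to expand the first‑order term and choose $\psi$. By \eqref{eq:push}, uniformly for points near $q$, $dy\big(\td\tau j(q)\big)=\sum_{\ell=0}^{k}\frac{\tau^\ell}{\ell!}\,dy\big(\adj\ell\drift f_j(q)\big)+\bigo(\tau^{k+1})$, so, substituting $u_j$ and rescaling $\sigma=\tau/t$,
\[
\int_0^t u_j(\tau)\,dy\big(\td\tau j(q)\big)\,d\tau=\sum_{\ell=0}^{k}\frac{\eps\,t^{\ell}}{\ell!}\left(\int_0^1\psi(\sigma)\,\sigma^\ell\,d\sigma\right)dy\big(\adj\ell\drift f_j(q)\big)+\bigo(\eps\,t^{k+1}).
\]
I then pick $\psi$ with $\int_0^1\psi(\sigma)\sigma^\ell\,d\sigma=0$ for $\ell=0,\dots,k-1$ and $\int_0^1\psi(\sigma)\sigma^k\,d\sigma\ne0$; such a $\psi$ exists — for instance a suitable scalar multiple of the degree‑$k$ (shifted) Legendre polynomial on $[0,1]$, which is $L^1$, orthogonal to every polynomial of degree $<k$, and has nonzero $k$-th moment — and scaling it one may also impose $\int_0^1|\psi|\le1$. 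With this choice all terms with $\ell<k$ disappear, leaving
\[
y(\gamma(t))=c\,\eps\,t^{k}\,dy\big(\adj k\drift f_j(q)\big)+\bigo(\eps\,t^{k+1}),\qquad c=\frac1{k!}\int_0^1\psi(\sigma)\,\sigma^k\,d\sigma\ne0,
\]
which is the asserted identity up to the harmless multiplicative constant $c$ (immaterial for the use we make of it in establishing the sharpness of \eqref{eq:bb2}; if the exact coefficient is wanted, one simply does not normalize $\psi$, at the cost of the weaker bound $\cost(\gamma)\le C\eps$).

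The main obstacle is the control of the remainder $R$. A priori the higher‑order iterated integrals contribute only $\bigo(\eps^2)$, which is not enough; the point — and this is where the argument of \cite{Coron2009} enters — is to exploit the vanishing of $\int_0^t u_j\,d\tau$ (and of the further moments) to show that these terms actually carry extra powers of $t$, so that they remain below the leading term $\eps\,t^{k}$. Carrying out this bookkeeping, and keeping precise track of the coupling between the two small parameters $\eps$ and $t$ so that the $\bigo(\eps\,t^{k+1})$ in the statement is uniform in the relevant regime $\eps t\to0$, is the technical heart of the proof; everything else is a routine Taylor expansion and Gronwall estimate.
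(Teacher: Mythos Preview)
Your linear analysis is correct, and your choice of profile is essentially the paper's: the vanishing-moment conditions on $\psi$ are equivalent, via integration by parts, to writing $\psi=\Phi^{(k)}$ with $\Phi^{(i)}(0)=\Phi^{(i)}(1)=0$ for $i<k$, which is exactly the paper's choice. The difficulty is where you locate it, in the remainder $R$, and there your proposal has a genuine gap.

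The hope that the vanishing moments of $\psi$ buy extra powers of $t$ in the \emph{nonlinear} part of the chronological expansion does not hold. For instance, the second iterated integral contains, after the rescaling $\sigma=\tau/t$, the contribution
\[
\eps^2 t\left(\int_0^1\sigma\,\psi(\sigma)\,\Psi(\sigma)\,d\sigma\right)\big(f_j\,[\drift,f_j]\big)(q),\qquad \Psi(\sigma)=\int_0^\sigma\psi,
\]
and one integration by parts gives $\int_0^1\sigma\,\psi\,\Psi=-\tfrac12\int_0^1\Psi^2\ne0$. This term is therefore genuinely of order $\eps^2 t$, which for $k\ge1$ is not $O(\eps t^{k+1})$ in the regime $t\ll\eps$; no moment condition on $\psi$ touches it. The ``bookkeeping'' you defer is thus not a matter of care but requires a different idea.

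The paper avoids this obstacle by a different mechanism. It does not Taylor-expand $\td\tau j$ at $q$ and rely on moment cancellations; instead it integrates by parts at the level of the time-dependent vector field itself, using $\partial_\tau\big((e^{-\tau\drift})_*f_j\big)=(e^{-\tau\drift})_*[\drift,f_j]$, to replace the control $\eta\,\Phi^{(k)}(\tau/t)$ acting on $(e^{-\tau\drift})_*f_j$ by the control $\eta t^k\,\Phi(\tau/t)$ acting on $(e^{-\tau\drift})_*\big(\adj k{\drift} f_j\big)$, and argues that the two generate the same flow. On the right the effective control has $L^1$-norm of order $\eta t^{k+1}=\eps t^k$, so every term of order $n\ge2$ in the chronological series is $O\big((\eps t^k)^n\big)$, automatically below $\eps t^{k+1}$ for $k\ge1$. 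Your moment conditions reproduce this integration-by-parts identity only after evaluation at the single point $q$ (i.e.\ at first order), not as an identity of time-dependent vector fields, which is why you lose control of the higher-order terms.
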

		
	\begin{proof}
	Let $t,\eta>0$ be fixed, and define $u\in\lcont$ as $u_i(\tau) \equiv \eta$, $u_j(\tau)\equiv 0$  for ${j\neq i}$, $\tau\in[0,t]$.	
	Then, fix any $\Phi\in C^{k}([0,1])$ such that $\Phi^{(i)}(0)=\Phi^{(i)}(1)=0$, for ${0\le i <k}$. Thus, by integrating by parts and the fact that $\frac d {dt} (e^{-tf_0})_*\,g=(e^{-tf_0})_* \big(\text{ad}( {f_0}) g\big)$, we get
	\[
		\int_0^t \Phi^{(k)}(\nicefrac{\tau}{t}) (e^{-\tau f_0})_* f_i(q)\, d\tau= t^k \int_0^t    \Phi(\nicefrac{\tau}{t}) (e^{-\tau f_0})_*\left( \adj k {f_0} f_i\right)(q)\,d\tau,
	\]
	for any $t$ and $q$.
	This implies that the flows generated by $\Phi^{(k)}(\nicefrac{\tau}{t}) (e^{-\tau f_0})_* f_i$ and $t^k \Phi(\nicefrac{\tau}{t}) (e^{-\tau f_0})_*\left( \adj k {f_0} f_i\right)$ coincide.
	Using the series expansions of the chronological exponential and $(e^{-tf_0})_*$, see \cite[Section 2.4]{Sachkov2004}, there holds, then,
	\[
		\begin{split}
			\crex_0^t \sum_{j=1}^m   \Phi^{(k)}(\nicefrac{\tau}{t}) u_j(\tau)  (e^{-\tau f_0})_*f_j \, d\tau  
			&= \crex_0^t  \eta  \Phi^{(k)}(\nicefrac{\tau}{t}) (e^{-\tau f_0})_*f_i \, d\tau\\
			&= \crex _0^t \eta t^k \,\Phi(\nicefrac{\tau}{t}) (e^{-\tau f_0})_*\left( \adj k {f_0} f_i\right)\,d\tau \\
			&= \crex _0^1 \eta t^{k+1}\, \Phi(s)  (e^{-ts f_0})_*\left( \adj k {f_0} f_i\right)\,ds  \\
			&= \crex _0^1 \eta t^{k+1} \, \Phi(s)  \left( \adj k {f_0} f_ i + \bigo(t)\right)\,ds  \\
			&=\text{Id} + \eta t^{k+1} \adj k {f_0} f_ i + \bigo(\eta t^{k+2}) 
		\end{split}
	\]
	Finally, considering any coordinate system and letting $\eps=\eta t$, this completes the proof.
	\end{proof}	
	
	\section{Control-affine systems}\label{sec:drift}
	
	In this section we apply the results of Section~\ref{sec:td} to control-affine systems. Let $\{f_1,\ldots,f_m\}$ be a bracket-generating family of vector fields, $\drift$ be a smooth vector field, called the \em drift\em, and consider the control-affine system
	\begin{equation}\label{cs:drift}\tag{D}
		\dot q=\drift(q)+\sum_{i=1}^m u_i\, f_i(q),\qquad q\in M,\quad u=(u_1,\ldots,u_m)\in\real^m.
	\end{equation}
	The assumption on $\{f_1,\ldots,f_m\}$ to be bracket-generating, is called \em strong H\"ormander condition \em for \eqref{cs:drift}.

	An absolutely continuous curve $\gamma:[0,T]\to M$ is \eqref{cs:drift}-admissible if $\dot\gamma(t)=\drift(\gamma(t))+f_{u(t)}(\gamma(t))$ for some control $u\in\lcont$. Its cost is defined as
	\[
		\costd(\gamma)=\min \|u\|_{\lcont},
	\]
	where the minimum is taken over all controls $u$ such that $\gamma$ is associated with $u$.	
	Then, we define the two value functions we are interested in as
	\begin{gather*}
		\ddt(q,q')=\inf\{ \cost(\gamma)\colon\: \gamma:[0,T']\to M \text{ is \eqref{cs:drift}-admissible, } \gamma:q\rightsquigarrow q',\,T'\le T \},\\
		\dd(q,q')=\inf\{ \cost(\gamma)\colon\: \gamma \text{ \eqref{cs:drift}-admissible and } \gamma:q\rightsquigarrow q' \}.
	\end{gather*}
	It is clear that $\ddt(q,q')\searrow\dd(q,q')$ as $T\rightarrow+\infty$, for any $q,q'\in M$. Moreover, we observe that, $\ddt(q,e^{tf_0}q)=0$ for any $0\le t\le T$.
	Finally, the reachable sets with respect to these value functions, from any $q\in M$ and for $\eps,T > 0$, are
	\[
		\bdt {q} \eps =\{q'\in M\colon\: \ddt(q,q')<\eps\},\quad \bd {q} \eps=\{q'\in M\colon\: \dd(q,q')<\eps\}.
	\]

\subsection{Connection with time-dependent systems}
	Applying the variations formula (see \cite{Sachkov2004}), system \eqref{cs:drift} can be written as a composition of a time-dependent system in the form \eqref{cs:td2} and of a translation along the drift. Namely, for any $u\in\lcont$, it holds
	\begin{equation}\label{eq:split}
		\crex_0^T \bigg( \drift+\sum_{i=1}^m u_i(t)\, f_i \bigg)\,dt  = e^{T\drift}\circ \crex_0^T  \sum_{i=1}^m u_i(t)\, (e^{-t\drift})_* f_i\,dt.
	\end{equation}
	We call \em time-dependent system associated with \eqref{cs:drift} \em the following control system,
	\begin{equation}\label{cs:ad}
		\dot q=\sum_{i=1}^m u_i\, (e^{-tf_0})_*f_i(q),\qquad q\in M,\quad u=(u_1,\ldots,u_m)\in\real^m.
	\end{equation}
	Observe that, since diffeomorphisms preserve linear independence, the strong H\"ormander condition for \eqref{cs:drift}, implies that $\{(e^{-t\drift})_*f_1,\ldots,(e^{-t\drift})_*f_m\}_{t\in[0,+\infty)}$ satisfies the strong H\"ormander condition, defined in Definition~\ref{def:hor}.
		
	Exploiting these facts, we can prove the following.
	
	\begin{prop} \label{prop:dcont}
		Assume that \eqref{cs:drift} satisfies the strong H\"ormander condition. Then, 
		for any $T>0$, the functions $\ddt,\dd:M\times M\to [0,+\infty)$ are continuous. Moreover, letting $\dsr$ be the sub-Riemannian distance induced by $\vett$, for any $q,q'\in M$ it holds 
		\[ 
		\ddt(q,q')\le\min_{0\le t \le T}\dsr(e^{t\drift}q,q'),\qquad \dd(q,q')\le\min_{t\ge 0}\dsr(e^{t\drift}q,q').
		\]
	\end{prop}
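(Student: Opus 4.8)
The plan is to transfer everything to the associated time-dependent system \eqref{cs:ad} via the splitting formula \eqref{eq:split}, and then invoke the results already proved in Section~\ref{sec:td}. First I would observe that, by \eqref{eq:split}, if $u\in\lcont$ steers \eqref{cs:ad} from $q$ to a point $p$ in time $T'$, then the same control steers \eqref{cs:drift} from $q$ to $e^{T'\drift}p$ in time $T'$, and conversely; moreover the two controls have the same $L^1$ norm. Hence $\ddt(q,q')$ equals the infimum over $T'\le T$ of $\rho_{T'}(q,(e^{-T'\drift})_* \text{-endpoint})$; more precisely, writing $\dtd$ for the value function of \eqref{cs:ad} restricted to controls on $[0,T']$, one gets the exact identity
\[
  \ddt(q,q')=\inf_{0\le T'\le T}\big\{\cost(\gamma)\colon \gamma \text{ is \eqref{cs:ad}-admissible on }[0,T'],\ \gamma:q\rightsquigarrow (e^{-T'\drift})q'\big\},
\]
and an analogous formula for $\dd$ with $T'$ ranging over $[0,+\infty)$. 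This reduces all the required statements to properties of the time-dependent value function, which is the content of Section~\ref{sec:td}.

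Next I would prove the two inequalities. Fix $t\in[0,T]$. By Lemma~\ref{lem:dsr} applied at time $t_0=t$ (the family $\{(e^{-\tau\drift})_*f_j\}$ satisfies the strong H\"ormander condition, hence the H\"ormander condition at every time, so the lemma applies, and in fact the remark after it allows $t_0$ arbitrary), there is a \eqref{cs:ad}-admissible curve on a short interval $[t,t_1]$ joining $e^{-t\drift}q'\cdot(\ldots)$ — more cleanly: run the drift alone from $q$ to $e^{t\drift}q$ at zero cost on $[0,t]$, which in the time-dependent picture corresponds to staying at $q$ with zero control on $[0,t]$, and then use Lemma~\ref{lem:dsr} at time $t_0=t$ to steer, in the time-dependent system \eqref{cs:ad}, from $q$ to $(e^{-t\drift})q'$ with cost at most $\dsr^{(t)}\big(q,(e^{-t\drift})q'\big)$, where $\dsr^{(t)}$ is the sub-Riemannian distance of $\{(e^{-t\drift})_*f_j\}$. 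Since $(e^{-t\drift})_*$ is a diffeomorphism, $\dsr^{(t)}\big(q,(e^{-t\drift})q'\big)=\dsr\big(e^{t\drift}q,q'\big)$ for the distance $\dsr$ of $\vett$. Undoing the splitting \eqref{eq:split} gives a \eqref{cs:drift}-admissible curve from $q$ to $q'$ in time $\le T$ (for $t$, $t_1-t$ small) with cost $\le\dsr(e^{t\drift}q,q')+\eps$; letting $\eps\downarrow0$ and minimizing over $t$ yields $\ddt(q,q')\le\min_{0\le t\le T}\dsr(e^{t\drift}q,q')$, and the same argument with $t$ unrestricted gives the bound for $\dd$. (One checks separately that the minima are attained: $t\mapsto\dsr(e^{t\drift}q,q')$ is continuous and, for $\dd$, bounded below by $0$, and either the infimum is attained on a compact interval or it can be approached, which is all that is needed.)

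Finally, for continuity I would again pass through \eqref{cs:ad}. The map $(t,q)\mapsto e^{t\drift}q$ is smooth, and by Theorem~\ref{prop:dsrp} the time-dependent value function of \eqref{cs:ad} is continuous. To get continuity of $\ddt$ one argues as in the proof of Theorem~\ref{prop:dsrp}: lower and upper semicontinuity at $(q,q')$ follow by concatenation, using the ``pseudo-triangle'' estimates $\ddt(q,q')\le\ddt(q,q'')+\rho$-type bounds obtained exactly as there, together with the fact (from the inequality just proved, or directly from Lemma~\ref{lem:dsr} transported back) that $\ddt(q,q'')\to0$ and $\ddt(q'',q')\to0$ as $q''\to q$, resp. $q''\to q'$, along short-time curves. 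For $\dd$ one uses $\dd(q,q')=\lim_{T\to\infty}\ddt(q,q')=\inf_T\ddt(q,q')$, a decreasing limit of continuous functions, hence upper semicontinuous; lower semicontinuity needs the same concatenation argument as above applied directly to $\dd$, noting the short-time connecting curves have arbitrarily small cost \emph{and} arbitrarily small time, so they can be appended without spoiling any time budget (there is none for $\dd$). I expect the main obstacle to be bookkeeping the time variable through the splitting \eqref{eq:split} and checking that the short correcting curves supplied by Lemma~\ref{lem:dsr} at the \emph{moving} base time $t_0=t$ really have both small time-duration and small cost uniformly — i.e. making the ``letting $\eps,\eta\downarrow0$'' steps of the Theorem~\ref{prop:dsrp} proof go through when one of the concatenated pieces is a drift translation of variable length; everything else is a direct translation of the time-dependent results.
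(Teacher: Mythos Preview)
Your approach is essentially the same as the paper's: both prove the inequality by first drifting freely from $q$ to $e^{t\drift}q$ at zero cost and then invoking a Lemma~\ref{lem:dsr}-type bound to connect to $q'$ at cost $\le\dsr(e^{t\drift}q,q')$, and both obtain continuity by rerunning the concatenation arguments of Theorem~\ref{prop:dsrp}. The only real difference is packaging. The paper does not pass through \eqref{cs:ad} at all; it adapts Lemmata~\ref{lem:suss} and~\ref{lem:dsr} directly to \eqref{cs:drift} by replacing the switching end-point map $E_{\epmtime,\mathcal F}$ with $e^{\epmtime\drift}\circ E_{\epmtime,\mathcal F}$, and then defines the shifted value function $\varphi_t(p)=\inf\{\costd(\gamma):\gamma:[t,T']\to M,\ \gamma:p\rightsquigarrow q',\ T'\le T\}$ so that the inequality reads simply $\ddt(q,q')\le\varphi_t(e^{t\drift}q)\le\dsr(e^{t\drift}q,q')$.

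Your reduction via \eqref{eq:split} is equivalent but carries exactly the bookkeeping burden you anticipated, and there is a small slip in it: if you steer \eqref{cs:ad} on $[t,t_1]$ from $q$ to $e^{-t\drift}q'$ and then undo the splitting, the \eqref{cs:drift}-endpoint is $e^{t_1\drift}(e^{-t\drift}q')=e^{(t_1-t)\drift}q'$, not $q'$. The correct target in \eqref{cs:ad} at time $t_1$ is $e^{-t_1\drift}q'$, so the bound you actually get is $\dsr^{(t)}(q,e^{-t_1\drift}q')=\dsr(e^{t\drift}q,e^{(t-t_1)\drift}q')$. This is harmless once you let $t_1\downarrow t$ (and is absorbed by your ``letting $\eps\downarrow0$''), but it illustrates why the paper's direct adaptation is cleaner.
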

	
	\begin{proof}
		The continuity of the two functions, and the fact that $\ddt(q,q'),\dd(q,q')\le\dsr(q,q')$, for any $q,q'\in M$, follows from the same arguments used in Theorem~\ref{prop:dsrp}, adapting Lemmata~\ref{lem:suss} and \ref{lem:dsr} to the system \eqref{cs:drift}. In particular, one has to consider $(\epmtime,\mathcal F, \xi)\mapsto e^{\epmtime\drift}\circ E_{\epmtime,\mathcal F}(\xi)$ instead of $(\epmtime,\mathcal F, \xi)\mapsto E_{\epmtime,\mathcal F}(\xi)$.

		To prove the second part of the statement, we let, for any $t\in[0,T)$,
		\[
			\varphi_t(p)=\inf\{ \cost(\gamma)\colon\: \gamma:[t,T']\to M \text{ is \eqref{cs:drift}-admissible, } \gamma:p\rightsquigarrow q',\,T'\le T \}.
		\]
		It is clear that, as above, it holds $\varphi_t(p)\le\dsr(p,q')$.
		Moreover, we observe that $\ddt(q,e^{t\drift}q)=0$ for any $0\le t < T$, and hence that
		for any such $t$ it holds
		\[
			\ddt(q,q')\le \varphi_t(e^{t\drift}q)\le \dsr(e^{t\drift}q,q').
		\]
		Taking the minimum for $0\le t < T$, proves the inequality regarding $\ddt$. To complete the proof it suffices to observe that $\dd(q,q')\le \ddt (q,q')$ for any $T>0$.
	\end{proof}
	
	We point out that in system \eqref{cs:drift}, as in time-dependent systems, the existence of minimizers is not assured. Moreover, this lack of minimizers is possible even if they exist for the associated time-dependent system, as the following example points out.
	
	\begin{example}
		Consider the following vector fields on $\real^3$, with coordinates $(x,y,z)$,
		\[
			f_1(x,y,z)=\partial_x,\qquad f_2(x,y,z)=\partial_y+x\partial_z.
		\]
		Since $[f_1,f_2]=\partial_z$, $\{f_1,f_2\}$ is a bracket-generating family of vector fields. 
		The sub-Riemannian control system associated to $\{f_1,f_2\}$ on $\real^3$ corresponds to the Heisenberg group.
		
		Let now $f_0=\partial_z$ be the drift.
		Since $[f_1,\partial_z]=[f_2,\partial_z]=0$ it holds that $(e^{-t\drift})_*f_1=f_1$ and $(e^{-t\drift})_*f_2=f_2$. Hence, the associated time-dependent  system is actually not time-dependent. 
		Thus, by \eqref{eq:split}, a curve $\gamma:[0,T]\to \real^3$ is \eqref{cs:sr}-admissible for $\{f_1,f_2\}$ if and only if $\tilde\gamma(\cdot)=e^{\cdot f_0}\circ\gamma(\cdot)$ is \eqref{cs:drift}-admissible. As a consequence of this, for any $q\in\real^3$ and any $\eps>0$, 
		\[
			\bd {q} \eps= \bigcup_{t\ge 0} e^{t f_0}\circ \bsr {q} \eps.
		\]
		
		As pointed out in Section~\ref{sec:sr}, minimizers for the sub-Riemannian system exist between any pair of points in $\bsr q \eps$, if $\eps$ is sufficiently small.
		Let us show that, for any point in $ \bd q \eps$ with positive $z$ coordinate, we have an explicit minimizer, while for the others there exists no minimizer. 
		Without loss of generality we can consider $q=0$.
		Then, since $e^{t' f_0}(x',y',z')=(x',y',z'+t')$, every point $(x,y,z)\in \bd 0 \eps$ with $z>0$, can be reached optimally considering the sub-Riemannian minimizing curve between the origin and $(x,y,0)$ rescaled on time $z$. 
		
		If, instead, $z\le 0$, 
		we cannot construct any sub-Riemannian trajectory from $0$ to $(x,y,z-t)$, $t>0$, with cost  $\le\dsr(0,(x,y,z))$. This is due to the fact that the minimizing trajectories in Heisenberg group are the lifts of arcs on the plane $(x,y)$, spanning area equal to the $z$ coordinates, and that $|z-t|=-z+t>|z|$.
		 Since, by Proposition~\ref{prop:dcont}, $\dd(0,(x,y,z))\le\dsr(0,(x,y,z))$, this implies that there exists no minimizer for $\dd(q,(x,y,z))$.
	\end{example}

\subsection{Estimates on reachable sets}
	
	In this section we apply Theorem~\ref{thm:ballbox}, in order to obtain results in the spirit of Theorem~\ref{thm:srballbox} and Corollary~\ref{cor:holder}.	
	First, we need the following definition.
	
	\begin{defn}
		The point $q$ is said to be \em regular with respect to the drift $\drift$\em, if $q'\mapsto\ord_{q'}(f_0)$ is locally constant at $q$.
	\end{defn}
		
	The main result of this section are the following local regularity estimates for $\dd$. We cannot expect anything global, since in general the sets $\bd q \eps$ are noncompact.

	\begin{thm}\label{thm:holder}
		Assume that \eqref{cs:drift} satisfies the strong H\"ormander condition, and 		
		let $q$ be regular with respect to the drift $\drift$. Assume, moreover, that $z=(z_1,\ldots,z_n)$ is a system of privileged coordinates at $q$ for $\vett$, such that $z_* \drift=\partial_{z_k}$, for some $1\le k \le n$. Then,  there exist $T_0,\eps_0,C>0$ such that 
		\[
			\frac{1} {C} \dist\left(z(q'),z(e^{[0,T]\drift}q)\right) \le \ddt (q,q')\le C   \dist\left(z(q'),z(e^{[0,T]\drift}q)\right) ^{\nicefrac{1}{r}},\quad q'\in \bdt q \eps,
		\]
		where, for any $x\in\real^n$ and $A\subset\real^n$,  $\dist(x,A)=\inf_{y\in A}|x-y|$ is the Euclidean distance between them	and $r$ is the degree of non-holonomy of $\vett$ at $q$.
	\end{thm}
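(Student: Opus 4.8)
The plan is to establish the two inequalities separately. Everything hinges on the following elementary remark: since $z_*\drift=\partial_{z_k}$ and privileged coordinates are centered at $q$, so that $z(q)=0$, the flow of the drift acts in these coordinates as a translation, $z(e^{t\drift}q')=z(q')+te_k$ for every $q'$ and $t$; hence $z\big(e^{[0,T]\drift}q\big)$ is the segment $S_T=\{te_k:0\le t\le T\}\subset\real^n$, and $z(q')-z(e^{t\drift}q)=z(q')-te_k=z(e^{-t\drift}q')$ for every $t\in[0,T]$. For the lower bound I would push a near-optimal control-affine trajectory through the variations formula \eqref{eq:split} and invoke Theorem~\ref{thm:ballbox} \emph{at the point $q$}; for the upper bound I would combine Proposition~\ref{prop:dcont} with a locally uniform H\"older estimate for $\dsr$.

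\emph{Lower bound.} Fix $\eps$ with $\ddt(q,q')<\eps<\eps_0$ and choose a \eqref{cs:drift}-admissible $\gamma:[0,T']\to M$, $T'\le T$, from $q$ to $q'$, associated to a control $u$ with $\|u\|_{\xLone}<\eps$. If $T$ were not already small one would first use Lemma~\ref{lem:time} to replace $\gamma$ by a near-optimal trajectory whose time is controlled by $\ddt(q,q')$; in any case $T'$ can be taken below the threshold appearing in Theorem~\ref{thm:ballbox} once $\eps_0$ and $T_0$ are small. By \eqref{eq:split}, $q'=\gamma(T')=e^{T'\drift}(p)$, where $p$ is the endpoint of the \eqref{cs:td2}-trajectory from $q$ with the same control $u$; thus $p\in\btdt{q}{\eps}{T'}$. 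Since $z$ is privileged at $q$ for $\vett$ and $\ord_q(\drift)=\ord_q(\partial_{z_k})=-w_k<0$, Theorem~\ref{thm:ballbox} applies; for $\eps$ and $T'$ small every right-hand side of \eqref{eq:bb} and \eqref{eq:bb2} is $\le C\eps$, so $|z(p)|\le C'\eps$ with $C'$ independent of $q'$ and $\eps$. Because $z(p)=z(e^{-T'\drift}q')=z(q')-z(e^{T'\drift}q)$ and $e^{T'\drift}q\in e^{[0,T]\drift}q$, this gives $\dist\big(z(q'),z(e^{[0,T]\drift}q)\big)\le|z(q')-z(e^{T'\drift}q)|=|z(p)|\le C'\eps$; letting $\eps\downarrow\ddt(q,q')$ yields the left inequality.

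\emph{Upper bound.} Write $d=\dist\big(z(q'),z(e^{[0,T]\drift}q)\big)=\dist(z(q'),S_T)$; by the left inequality $d\le C'\ddt(q,q')<C'\eps_0$, so $d$ is as small as we wish after shrinking $\eps_0$. Pick $t^*\in[0,T]$ with $|z(q')-t^*e_k|=d$, and for $\delta>0$ set $t_\delta=\max\{t^*-\delta,0\}\in[0,T]$. Flowing along $\drift$ has zero cost, and Proposition~\ref{prop:dcont} (which gives $\ddt(q,q')\le\min_{0\le t\le T}\dsr(e^{t\drift}q,q')$) yields $\ddt(q,q')\le\dsr(e^{t_\delta\drift}q,q')$. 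The points $e^{t_\delta\drift}q$ and $q'$ lie in a fixed small neighborhood of $q$, and in the chart $z$ their Euclidean distance is $|z(q')-t_\delta e_k|\le d+\delta$; applying the locally uniform version of the H\"older estimate \eqref{int:holder} for $\dsr$ — valid near $q$ because the degree of non-holonomy of $\vett$ is, by upper semicontinuity, bounded by $r$ on a neighborhood of $q$, with the regularity of $q$ with respect to $\drift$ guaranteeing uniformity along $S_T$ — we obtain $\ddt(q,q')\le C(d+\delta)^{\nicefrac{1}{r}}$. Letting $\delta\downarrow0$ gives the right inequality and completes the proof.

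The genuine obstacle is the uniformity required in the upper bound: one must bound $\dsr(e^{t\drift}q,q')$ by $|z(q')-te_k|^{\nicefrac{1}{r}}$ with a \emph{single} constant for all $t\in[0,T]$, and this cannot be obtained by applying Corollary~\ref{cor:holder} at $e^{t\drift}q$, since the translated coordinates $z-te_k$ need not be privileged there; one must instead establish (or cite) a version of the Ball-Box inequality uniform on a neighborhood of $q$, and it is exactly for this that the hypotheses $z_*\drift=\partial_{z_k}$ and the regularity of $q$ with respect to $\drift$ are needed. By contrast, checking that the estimates of Theorem~\ref{thm:ballbox} collapse to $\bigo(\eps)$ for small $\eps,T'$, and the handling of the boundary case $t^*=T$ (dealt with above by the $\delta$-perturbation, since flowing along the drift for the full time $T$ leaves no time for the controlled part), are routine.
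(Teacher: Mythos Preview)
Your argument is correct and uses essentially the same ingredients as the paper, but the paper organizes them differently: it first proves Theorem~\ref{thm:dballbox} (the set inclusions $\Xi_T(C^{-1}\eps)\subset\bdt q\eps\subset\Pi_T(C\eps)$) and then reads off both H\"older inequalities from these inclusions. Your proposal unwinds this: the lower bound is obtained directly from the variations formula and Theorem~\ref{thm:ballbox} (which is exactly how the paper proves the inclusion $\bdt q\eps\subset\Pi_T(C\eps)$, except that you only extract the crude consequence $|z_i(p)|\lesssim\eps$ and therefore never need the time bound of Lemma~\ref{lem:time}), and the upper bound is obtained from Proposition~\ref{prop:dcont} plus a uniform $\nicefrac1r$--H\"older estimate for $\dsr$ (which is exactly how the paper proves the inclusion $\Xi_T(C^{-1}\eps)\subset\bdt q\eps$). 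So the two proofs are the same argument at two different levels of packaging; what you gain is that for the H\"older estimate alone you do not need the sharper $\Pi_T$ shape nor Lemma~\ref{lem:time}.

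One correction to your self-assessment: the uniform bound $\dsr(e^{t\drift}q,q')\le C|z(q')-te_k|^{\nicefrac1r}$ for $t\in[0,T]$ is a purely sub-Riemannian fact (local uniform $\nicefrac1r$--H\"older continuity of $\dsr$ in any smooth chart, as in Nagel--Stein--Wainger or the uniform Ball--Box estimates of Bella\"\i che) and has nothing to do with the regularity of $q$ with respect to $\drift$; that hypothesis enters the paper only through Lemma~\ref{lem:time}, which you have in fact bypassed. The paper's own proof of the inclusion $\Xi_T(C^{-1}\eps)\subset\bigcup_t\bsr{e^{t\drift}q}{\eps}$ relies on the same uniform estimate and is equally implicit about it, so your ``genuine obstacle'' is a lacuna shared with the paper rather than a defect of your route.
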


	Let us define the following sets, for parameters $\eta>0$ and $T>0$. We remark that $\bbox \eta$ is defined as in \eqref{def:box} and that $\{\partial_{z_i}\}_{i=1}^n$ is the canonical basis in $\real^n$.
	\[
	\begin{split}
		\Xi_T(\eta) &= \bigcup_{0\le \xi\le T} \bigg(  \xi \partial_{z_k} + \bbox \eta  \bigg), \\ 	
		\Pi_T (\eta) &= \bbox \eta \cup \bigcup_{0<\xi\le T} \{ z\in\real^n\colon\: z_k=\xi,\, |z_i|\le \eta^{w_i} + \eta \xi^{\frac {w_i} s} \text{ for } w_i\le s, i\neq k, \\
		&\qquad \qquad \qquad \qquad \qquad \qquad \qquad \qquad \qquad  \text{ and }|z_i|\le \eta (\eta + \xi^{\frac 1 s})^{w_i-1} \text{ for } w_i> s\},\\
		\widehat\Pi_T (\eta) &= \bbox \eta \cup \bigcup_{0<\xi\le T} \{ z\in\real^n\colon\: z_k=\xi,\, |z_i|\le\eta^{w_i}\text{ for } w_i\le s, i\neq k, \\
		&\qquad \qquad \qquad \qquad \qquad \qquad \qquad \qquad \qquad  \text{ and }|z_i|\le \eta (\eta + \xi^{\frac 1 s})^{w_i-1} \text{ for } w_i> s\}.
	\end{split}
	\]
	
	As in the sub-Riemannian case, Theorem~\ref{thm:holder} is a direct consequence of some estimates on the shape of the accessible sets, contained in the following.
	
	\begin{thm}\label{thm:dballbox}
		Assume that \eqref{cs:drift} satisfies the strong H\"ormander condition, and 		
		let $q\in M$ be regular with respect to the drift $\drift$. Assume, moreover, that $z=(z_1,\ldots,z_n)$ is a system of privileged coordinates at $q$ for $\vett$, such that $z_* \drift=\partial_{z_k}$, for some $1\le k \le n$.
		Then, there exist $C,\eps_0,T_0>0$ such that 
		\begin{equation}\label{eq:bboxdrift}
			 \Xi_T\left( {\frac{1}{C} \eps}\right)\subset  \bdt q \eps \subset \Pi_T(C\eps),\qquad \text{for } \eps<\eps_0\text{ and } T<T_0.
		\end{equation}
		Here, with abuse of notation, we denoted by $\bdt q \eps$ the coordinate representation of the reachable set. In particular,  
		\[
			 \bbox {\frac{1}{C} \eps}\cap \{z_k\le 0\}\subset \bdt q \eps \cap \{z_k\le 0\} \subset \bbox {C\eps}\cap \{z_k\le 0\}.
		\]
		Moreover, if the system is nilpotent, it holds
		\begin{equation}\label{eq:bboxdriftnil}
			 \Xi_T\left( {\frac{1}{C} \eps}\right)\subset  \bdt q \eps \subset \widehat\Pi_T(C\eps),\qquad \text{for } \eps<\eps_0\text{ and } T<T_0.
		\end{equation}
	\end{thm}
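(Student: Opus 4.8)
The plan is to deduce Theorem~\ref{thm:dballbox} from the estimates already established for the time-dependent system \eqref{cs:ad} in Theorem~\ref{thm:ballbox}, by means of the variations formula \eqref{eq:split}. Since $z_*\drift=\partial_{z_k}$, the flow $e^{\xi\drift}$ acts in the coordinates $z$ as the translation $z\mapsto z+\xi\partial_{z_k}$, and $s:=-\ord_q(\drift)=w_k$. By \eqref{eq:split}, a control $u$ steering \eqref{cs:drift} from $q$ in time $T'$ to $q'$ steers \eqref{cs:ad} from $q$ in time $T'$ to $e^{-T'\drift}q'$, and conversely; hence, identifying reachable sets with their coordinate representations,
\[
	\bdt q \eps=\bigcup_{0\le T'\le T}\Big(T'\partial_{z_k}+\btdt q \eps {T'}\Big),
\]
where $\btdt q \eps {T'}$ is the set of points reachable by \eqref{cs:ad} in time $\le T'$ with cost $<\eps$. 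I will also use that the estimates of Theorem~\ref{thm:ballbox} and Proposition~\ref{prop:ngenappr} are, by their proofs, controlled by the actual elapsed time of the trajectory, hence may be applied with the variable time $T'\le T$ in place of the fixed horizon.

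For the outer inclusion, take $q'\in\bdt q \eps$ and write $z(q')=T'\partial_{z_k}+z(\bar q)$ with $\bar q\in\btdt q \eps {T'}$, $T'\le T$. Applying Theorem~\ref{thm:ballbox} to $\bar q$ with elapsed time $T'$ bounds every coordinate of $\bar q$; since $w_k=s$, the $k$-th one satisfies $|z_k(\bar q)|\le C(\eps^{s}+\eps T')$, so $\xi:=z_k(q')=T'+z_k(\bar q)$ differs from $T'$ by at most $C(\eps^{s}+\eps T')$. Inserting this comparison into the bounds \eqref{eq:bb}--\eqref{eq:bb2} for $z_i(q')=z_i(\bar q)$, $i\neq k$, and enlarging $C$, one sees that $z(q')$ belongs to the slice $\{z_k=\xi\}$ of $\Pi_T(C\eps)$ when $\xi>0$ (with $\xi$ confined to $[0,T]$ up to an $\bigo(\eps)$ correction, absorbed into $C$ and $T_0$), and to $\bbox{C\eps}$ when $\xi\le0$, in which case $T'=\bigo(\eps^{s})$ forces $|z_i(q')|=\bigo(\eps^{w_i})$ for every $i$. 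Hence $\bdt q \eps\subset\Pi_T(C\eps)$. When the system is nilpotent, \eqref{cs:ad} is series homogeneous, so the sharper estimate \eqref{eq:bbn} replaces, for $w_i\le s$, the $T'$-dependent bound by $|z_i(\bar q)|\le C\eps^{w_i}$, and the same argument gives $\bdt q \eps\subset\widehat\Pi_T(C\eps)$. Intersecting with $\{z_k\le0\}$, on which $\Pi_T(\eta)$, $\widehat\Pi_T(\eta)$ and $\Xi_T(\eta)$ all reduce to $\bbox\eta$, gives the claimed chain of inclusions on that half-space.

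For the inner inclusion note that $z^{-1}(\xi\partial_{z_k}+p)=e^{\xi\drift}\big(z^{-1}(p)\big)$. By the first inclusion of the Ball-Box Theorem~\ref{thm:srballbox}, one has $\bbox{\eps/C}\subset\bsr q \eps$ for a suitable $C$, i.e.\ $\dsr(q,z^{-1}(p))<\eps$ for all $p\in\bbox{\eps/C}$. For $\xi=0$, Proposition~\ref{prop:dcont} gives $\ddt(q,z^{-1}(p))\le\dsr(q,z^{-1}(p))<\eps$, hence $z^{-1}(p)\in\bdt q \eps$. For $\xi>0$, Lemma~\ref{lem:dsr} applied to \eqref{cs:ad} furnishes, for every small enough $\delta>0$, a control of cost $<\eps$ steering \eqref{cs:ad} from $q$ to $z^{-1}(p)$ in time $\le\delta$; taking $\delta\le\xi$ and invoking \eqref{eq:split}, the same control steers \eqref{cs:drift} from $q$ to $e^{\delta\drift}z^{-1}(p)$ in time $\delta$, and prolonging by the cost-free drift flow for time $\xi-\delta$ reaches $e^{\xi\drift}z^{-1}(p)=z^{-1}(\xi\partial_{z_k}+p)$ with cost $<\eps$ and total time $\xi\le T$. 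Thus $\Xi_T(\eps/C)\subset\bdt q \eps$, which together with the outer inclusion proves \eqref{eq:bboxdrift} and \eqref{eq:bboxdriftnil}; the constants $C,\eps_0,T_0$ are obtained by combining those coming from Theorem~\ref{thm:ballbox}, the Ball-Box Theorem and Lemma~\ref{lem:dsr}.

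The step I expect to require the most care is the outer inclusion: one must use the form of Theorem~\ref{thm:ballbox} with the true elapsed time (not just the fixed horizon), identify that time $T'$ with the coordinate $z_k(q')$ up to a $C(\eps^{s}+\eps T')$ error, and carry this identification through the coordinate estimates so that $z(q')$ falls exactly inside $\Pi_T(C\eps)$, resp.\ $\widehat\Pi_T(C\eps)$ — this is where the distinguished coordinate $k$, along which $\drift$ translates and for which $w_k=s$, has to be treated by hand. A secondary, routine point is the uniformity in $p$ over the box $\bbox{\eps/C}$ when using Lemma~\ref{lem:dsr}, which is handled by compactness (or circumvented by treating each pair $(\xi,p)$ separately, as above). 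The remaining ingredients — the reduction via \eqref{eq:split} and the fact that \eqref{cs:ad} is series homogeneous when the control-affine system is nilpotent — are straightforward.
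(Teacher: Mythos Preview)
Your proposal is correct and follows essentially the same strategy as the paper: reduce to the time-dependent system via \eqref{eq:split}, use Theorem~\ref{thm:ballbox} for the outer inclusion, and combine Ball-Box with the drift flow for the inner one. The one organizational difference is that the paper isolates the key time bound $T'\le C\big(\eps^s+\max\{z_k(q'),0\}\big)$ as a separate Lemma~\ref{lem:time}, whereas you derive the equivalent relation inline from the $k$-th coordinate estimate $|z_k(\bar q)|\le C(\eps^s+\eps T')$ of Theorem~\ref{thm:ballbox} and then solve for $T'$ in terms of $\xi=z_k(q')$; the two arguments are the same computation. For the inner inclusion the paper simply invokes Proposition~\ref{prop:dcont} in the form $\ddt(q,q')\le\min_{0\le t\le T}\dsr(e^{t\drift}q,q')$ together with Ball-Box, which implicitly requires a uniform Ball-Box constant along the short drift segment; your construction via Lemma~\ref{lem:dsr} (steer the \eqref{cs:ad}-system from $q$ to $z^{-1}(p)$ in time $\le\delta$, then drift for time $\xi-\delta$) is slightly more explicit and has the minor advantage of using Ball-Box only at the fixed base point $q$.
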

	
	In order to prove Theorem~\ref{thm:dballbox}, we need the following lemma.
	
	\begin{lem}\label{lem:time}
		Let $z=(z_1,\ldots,z_n)$ be a system of privileged coordinates at $q\in M$.
		Then there exist $C,\eps_0,T_0>0$ such that,  for any $q'\in\bdt q {\eps_0} $ for $\eps<\eps_0$ and $T<T_0$, and such that
		\begin{enumerate}[(i)]
			\item for any $t<\eps_0$, $\ord_{q'(t)}\drift=-s$, where $q'(t)=e^{-t\drift}(q')$,
			\item $dz_k\big(\drift(z(q'))\big)\neq 0$, for some $k$ with $w_k=-s$,		\end{enumerate}
		 it holds that, if $u\in\lcont$ is a control steering the system \eqref{cs:drift} from $q$ to $q'$, with $\|u\|_1=\eps$, then
		\[
			T\le C \big( \eps^s + \max\{z_k(q'),0\} \big).
		\]
	\end{lem}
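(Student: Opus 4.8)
The plan is to exploit the affine structure of \eqref{cs:drift} and the connection with time-dependent systems in \eqref{eq:split} to extract information about the elapsed time $T$ from the $k$-th coordinate, which is (up to higher-order terms) the ``clock'' coordinate rectifying the drift. First I would write a control $u$ steering \eqref{cs:drift} from $q$ to $q'$ in time $T$ as the concatenation, via \eqref{eq:split}, of a trajectory $\hat\gamma$ of the associated time-dependent system \eqref{cs:ad} with the same control on $[0,T]$, followed by the translation $e^{T\drift}$. In privileged coordinates with $z_*\drift=\partial_{z_k}$ this gives $z_k(q') = z_k(\hat\gamma(T)) + T$, so it suffices to bound $|z_k(\hat\gamma(T))|$ from below, i.e. to show $z_k(\hat\gamma(T)) \gtrsim -\eps^s$, since then $T = z_k(q') - z_k(\hat\gamma(T)) \le z_k(q') + C\eps^s$, and combining with $T\ge 0$ yields the claimed $T\le C(\eps^s + \max\{z_k(q'),0\})$.

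The key step is therefore the lower bound on $z_k(\hat\gamma(T))$. Here I would invoke the reachable-set estimates of Section~\ref{sec:td}, specifically Theorem~\ref{thm:ballbox} (and, for the comparison of the true flow with its \app, Proposition~\ref{prop:ngenappr}), applied to the point $\hat\gamma(T)\in\btdt q \eps T$. Since $w_k = s$ (hypothesis (ii)), estimate \eqref{eq:bb} gives $|z_k(\hat\gamma(T))| \le C(\eps^{s} + \eps T^{s/s}) = C(\eps^s + \eps T)$. This is not yet quite enough because of the $\eps T$ term, which could be large if $T$ is large; so the argument must be bootstrapped. One clean way: assume for contradiction that $T$ is large compared to $\eps^s + \max\{z_k(q'),0\}$; then from $z_k(q') = z_k(\hat\gamma(T)) + T$ one gets $|z_k(\hat\gamma(T))| \ge T - \max\{z_k(q'),0\} \gtrsim T$, while \eqref{eq:bb} forces $|z_k(\hat\gamma(T))| \le C\eps(\eps^{s-1} + T)$; since $\eps<\eps_0$ can be taken with $C\eps < 1/2$, this gives $T \lesssim \eps(\eps^{s-1}+T) \le \tfrac12 T + C\eps^s$, hence $T \le 2C\eps^s$, contradicting the assumption once the constants are chosen appropriately. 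Running this carefully with explicit constants yields the bound. Hypothesis (i), that $\ord_{q'(t)}\drift = -s$ along the backward drift trajectory for small $t$, together with regularity of $q$ with respect to the drift, is what guarantees that $s = -\ord_q(\drift)$ is the relevant exponent uniformly, so that Theorem~\ref{thm:ballbox} applies with the same $s$ throughout the relevant neighborhood.

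The main obstacle I anticipate is the bootstrap/absorption argument above: one must be careful that the constants $C$ in Theorem~\ref{thm:ballbox} and Proposition~\ref{prop:ngenappr} do not depend on $T$ in a way that breaks the absorption $C\eps T \le \tfrac12 T$, and that the thresholds $T_0,\eps_0$ can be chosen once and for all. A secondary technical point is transferring the estimate from the time-dependent trajectory $\hat\gamma$ back to the genuine control-affine trajectory: this is exactly what \eqref{eq:split} provides, but one should check that $\|u\|_1 = \eps$ is preserved under the correspondence (it is, since the control is literally the same in both systems) and that $T' \le T$ versus an exact time $T$ does not cause trouble — working with the actual time at which $q'$ is reached, and noting monotonicity of the relevant quantities in $T$, handles this. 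Finally, the case $z_k(q')\le 0$ should be noted explicitly: there the conclusion reads $T\le C\eps^s$, consistent with the statement's $\max\{z_k(q'),0\}$, and the proof is the same.
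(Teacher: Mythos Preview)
Your approach is essentially the paper's: split via \eqref{eq:split} into the time-dependent trajectory $\hat\gamma$ followed by $e^{T\drift}$, bound $|z_k(\hat\gamma(T))|$ by Theorem~\ref{thm:ballbox} (getting $C(\eps^s+\eps T)$ since $w_k=s$), and then absorb the $\eps T$ term into the left-hand side by taking $\eps_0$ small enough that $C\eps<1$. The paper carries this out in exactly the same order, with the same absorption.

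One point to flag: you write ``in privileged coordinates with $z_*\drift=\partial_{z_k}$ this gives $z_k(q')=z_k(\hat\gamma(T))+T$'', but the lemma as stated does \emph{not} assume the drift is rectified; hypothesis~(ii) only gives $dz_k(\drift(q'))\neq 0$. The paper bridges this by the mean value theorem: $z_k(q')-z_k(\hat\gamma(T))=\int_0^T dz_k\big(\drift(e^{t\drift}\hat\gamma(T))\big)\,dt = T\cdot dz_k\big(\drift(e^{\tau\drift}\hat\gamma(T))\big)$ for some $\tau\in[0,T]$, then observes $e^{\tau\drift}\hat\gamma(T)=e^{-(T-\tau)\drift}q'$ and uses~(ii) plus continuity of $\drift$ to lower-bound this coefficient by some $C_1>0$ for $T<T_0$. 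This is also where hypothesis~(i) enters implicitly, ensuring uniformity along the backward drift orbit of $q'$. Your simplified identity is valid only in the rectified setting of Theorem~\ref{thm:dballbox}, so for the lemma in its stated generality you need this extra mean-value step.
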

	
	\begin{proof}
		For any $\eta>0$, let $\gamma$ be the trajectory associated with $u\in\lcont$ in the system \eqref{cs:drift}, and satisfying  $\gamma:q\rightsquigarrow q'$. 
		Let $\tilde\gamma$ be the trajectory associated with $u$ and starting from $q$, in the time-dependent system \eqref{cs:ad}. Thus $\gamma(t)=e^{t\drift}\circ\tilde\gamma(t)$ and $\dtd(q,\tilde\gamma(T))\le\eps$.
		
		Recall that, for any vector field $g$ and point $p\in M$, it holds that $z_k(e^{Tg}(p))-z_k(p)=\int_0^T  dz_k\big(g(e^{tg}(p))\big)$. Thus, by the mean value theorem, there exists $\tau\in[0,T]$ such that 		
		\begin{equation}\label{eq:zk}
			z_k(q')=z_k(\gamma(T))=T\,dz_k\big(\drift(e^{\tau\drift}(\tilde\gamma(T))) \big)+z_k(\tilde\gamma(T)).
		\end{equation}
		
		Since $e^{\tau\drift}(\tilde\gamma(T))=e^{-(T-\tau)\drift}(q')$, by hypothesis $(ii)$ and the smoothness of $\drift$, there exist $T_0,C_1>0$, independent of $\gamma$, such that $dz_k\big(\drift(e^{\tau\drift}(\tilde\gamma(T))) \big)\ge C_1$ for $T<T_0$.  
		Hence, by Theorem~\ref{thm:ballbox} (since $w_k=s$), there exist $C_2,\bar\eps>0$ such that, if $\eps<\bar\eps$ and $T<T_0$, 
		\[
			T\le \frac{ |z_k(\tilde\gamma(T))|+ \max\{z_k(q'),0\} }{C_1}\le \frac{ C_2 \big( \eps^s+T \eps \big) + \max\{z_k(q'),0\}}{C_1}.
		\]
		Since the constants are independent of $\gamma$, taking $C=C_2/C_1$, $\eps_0\le\min\{T_0,\bar\eps,(C-1)/C^2\}$ completes the proof. 
	\end{proof}

	\begin{proof}[Proof of Theorem~\ref{thm:dballbox}]
		The first inclusion in \eqref{eq:bboxdrift} follows from Theorem~\ref{thm:srballbox} and Theorem~\ref{prop:dcont}. In fact, combining them, we have that, for any $\eps<\eps_0$ and any $T>0$,
		\[
			\Xi_T \left({\frac 1 C \eps}\right)\subset \bigcup_{0\le t \le T}\bsr {e^{t\drift}q} \eps \subset \bdt q \eps.
		\]
		 
		 To prove the second inclusion, we let $q'\in\bdt q \eps$. Fix any $\eta>0$ and consider a control $u\in\lcont$ such that  its associated trajectory $\gamma$, in the system \eqref{cs:drift}, satisfies  $\gamma:q\rightsquigarrow q'$ and $\costd(\gamma)\le\eps+\eta$. We distinguish two cases. First we assume that  $z_k(q')\le 0$. In this case, by Lemma~\ref{lem:time} it follows there exists $C,\eps_0,T_0>0$ such that if $T<T_0$ and $\eps<\eps_0$, then $T\le C\eps^s$. 
		 Moreover \eqref{eq:split} implies that $e^{-T\drift}(q')\in \btdt q \eps T$. Then, enlarging the constant $C$, Theorem~\ref{thm:ballbox} yields
		 \begin{gather*} \label{eq:dzk}
		 	|z_i(q')|=\big|z_i\big(e^{-T\drift}(q')\big)\big|\le C\big(\eps^{w_i}+\eps T^{\frac {w_i} s}\big)\le C \eps^{w_i},\quad \text{if } w_i\le s \text{ and } i\neq k, \\
			\label{eq:dzk3}
			|z_k(q')|\le T + \big|z_k\big(e^{-T\drift}(q')\big)\big|\le T+ C\big( \eps+T^{\frac 1 s})^s\le C\eps^{s},\\
			|z_i(q')|=\big|z_i\big(e^{-T\drift}(q')\big)\big|\le C\eps\big(\eps+ T^{\frac {1} s}\big)^{w_i-1}\le C \eps^{w_i},\quad \text{if } w_i> s.
		 \end{gather*}
		 Here, we used the fact that, for any $p\in M$, from $z_*\drift=\partial_{z_k}$, it holds $z_i(p)= z_i\big(e^{-T\drift}(p)\big)$ and ${|dz_k(\drift(p))|\equiv 1}$.
		 Thus, $q'\subset\bbox {C\eps}\subset\boxd {C\eps}$. 
		 
		 On the other hand, if $z_k(q')>0$, Lemma~\ref{lem:time} yields that $T\le C\big(\eps^s+z_k(q')\big)$. Then, applying again Theorem~\ref{thm:ballbox}, we get
		\[
		 \begin{split}
			 |z_i(q')|&\le C\big(\eps^{w_i}+\eps z_k(q')^{\frac{w_i} s}\big),\quad \text{if } w_i\le s \text{ and } i\neq k, \\			
			 |z_k(q')|&\le T+C\eps^s,\\
			 |z_i(q')|&\le C\eps\big(\eps+ z_k(q')^{\frac {1} s}\big)^{w_i-1},\quad \text{if } w_i> s.
		 \end{split}
		 \]
	This proves that $q'\subset\boxd {C\eps}$, completing the proof of \eqref{eq:bboxdrift}.
	
	To prove \eqref{eq:bboxdriftnil} it suffices to use the same argument as above,  applying the result on nilpotent systems in Theorem~\ref{thm:ballbox}.	 
	\end{proof}

	\begin{rmk}
		Theorem~\ref{thm:dballbox} suggests that the behavior of the system \eqref{cs:drift}, when moving in the direction $-\drift$, is essentially sub-Riemannian. However, although this is true locally in time, it is false in general. For example, consider the Euclidean plane endowed with a rotational drift, i.e., such that $\{e^{t\drift}(q)\}_{t\in(0,+\infty)}$ is diffeomorphic to $\mathcal S^1$ for any $q\neq 0$. Then, $\dd(q,e^{-t\drift}(q))=0$ for any $t>0$ and thus we can move in the direction $-\drift$ for free.
	\end{rmk}
	
	\begin{proof}[Proof of Theorem~\ref{thm:holder}]
		Since every norm on $\real^n$ is equivalent, $\dist(z(q'),[0,T]\partial_{z_k})$ is equivalent to 
		\[
			a(q')=\sum_{\substack{1\le i\le n\\ i\neq k}} |z_i(q')|+\min_{t\in[0,T]} |z_k(q')-t|.
		\]
		Thus, to complete the proof it suffices to prove that it holds $C^{-1}a(q')\le\ddt(q,q')\le Ca(q')^{\nicefrac{1}{r}}$.
		
		By Theorem~\ref{thm:ballbox}, $\Xi_T ({C^{-1}\eps})\subset \bdt q \eps \subset \Pi_T(C\eps)$ for any $\eps<\eps_0$. The first inclusion is equivalent to the fact that, for every $\eps<\eps_0$ such that $Ca(q')\le \eps^r$, one has $\ddt(q,q')\le \eps$. From this follows that $\ddt(q,q')\le C^{\nicefrac{1}{r}}a(q')^{\nicefrac{1}{r}}$.
		The same reasoning applied to the other inclusion proves that 
		\begin{gather*}
			|z_i(q')|\le C(\ddt(q,q')^{w_i}+\ddt(q,q') T^{\frac{w_i}{s}})  \text{ if } w_i\le s,\, i\neq k,\\
			 \min_{t\in[0,T]}|z_k(q')-t|\le C\ddt(q,q')^s,\\
			|z_i(q')|\le C(\ddt(q,q')^{w_i}+\ddt(q,q') T^{\frac{w_i-1}{s}}) \text{ if } w_i > s.
		\end{gather*}
		Clearly, this implies that $a(q')\le C\ddt(q,q')$, for some larger constant, completing the proof of the theorem.
	\end{proof}
	
\bibliographystyle{plain}
\bibliography{ART_HolderContinuity_def}

\end{document}